\definecolor{darkred}{rgb}{1,0,0} 
\definecolor{darkgreen}{rgb}{0,0.8,0}
\definecolor{darkblue}{rgb}{0,0,1}
\newcommand{\norm}[1]{\left\lVert#1\right\rVert}
\numberwithin{equation}{section}
\theoremstyle{plain}
\theoremstyle{plain}
\newtheorem{theorem}{Theorem}
\numberwithin{theorem}{section}
\newtheorem{proposition}[theorem]{Proposition}
\newtheorem{lemma}[theorem]{Lemma}
\theoremstyle{definition}
\newenvironment{example}
  {\pushQED{\qed}\examplex}
  {\popQED\endexamplex}
\theoremstyle{definition}
\newtheorem{remark}[theorem]{Remark}
\newtheorem{notation}[theorem]{Notation}
\newtheorem*{theorem*}{Corollary of Conjecture 4.1}
\newcommand{\interior}[1]{%
  {\kern0pt#1}^{\mathrm{o}}%
}
\DeclareMathOperator{\Imp}{Im}
\DeclareMathOperator{\Realp}{Re}
\DeclareMathOperator{\dist}{dist}
\newcommand{\C}{\mathbb{C}}
\title{Supnorm estimates for $\bar\partial$ on product domains in $\C^n$ }
\author{Martino Fassina}
\address{Department of Mathematics, University of Illinois at Urbana-Champaign, 1409 West Green
Street, Urbana, IL 61801, USA}
\email{fassina2@illinois.edu}
\author{Yifei Pan}
\address{Department of Mathematical Sciences, Purdue University Fort Wayne, 2101 East Coliseum Boulevard,
Fort Wayne, IN 46805, USA}
\email{pan@pfw.edu}
\begin{document}

\begin{abstract}
Let $\Omega\subset\C^n$ be a product of one-dimensional open bounded domains with $C^{1,\alpha}$ boundary, where $0<\alpha<1$. Using methods from complex analysis in one variable, we construct an integral operator that solves $\bar\partial$ in $\Omega$ with supnorm estimates when the datum is in $C^{n-1,\alpha}(\Omega)$. 

\end{abstract}
\subjclass[2010]{Primary 32W05. Secondary 32A55. }
\keywords{Supnorm estimates, Cauchy-Riemann equations.}
\maketitle
\section{Introduction and Main Result}
Let $\Omega$ be a domain in $\C^n$ and $f$ a $\bar\partial$-closed $(0,1)$ form on $\Omega$. A fundamental question in complex analysis is to establish the existence of solutions to the inhomogeneous Cauchy-Riemann equations $\bar\partial u=f$ that satisfy supnorm estimates in $\Omega$. 
The investigation of $L^{\infty}$ estimates for $\bar\partial$ was pioneered by Henkin \cite{H70} and Grauert-Lieb \cite{GL71} in the early 1970s. They independently proved the existence of solutions to the equation $\bar\partial u=f$ satisfying supnorm estimates when $\Omega$ is a strictly pseudoconvex domain with smooth boundary and $f$ is smooth in $\Omega$. In 1971 Henkin constructed an integral operator that solves $\bar\partial$ with supnorm estimates on the bidisc $D^2$ whenever the datum $f$ is in $C^1(\overline{D^2})$. 
A full proof of $L^{\infty}$ estimates for Henkin's solution appears in \cite{FLZ11}. Solutions with supnorm estimates have also been established on strictly pseudoconvex domains with piecewise smooth boundary \cite{RS73} and on pseudoconvex analytic polyhedra \cite{SH80}, once again under the assumption that the initial datum $f$ is smooth in $\Omega$. 

In this paper we consider domains $\Omega$ that are product of one-dimensional bounded domains. We give an explicit formula for a solution ${\bf T}f$ of the equation $\bar\partial u=f$ that satisfies supnorm estimates in $\Omega$. Below is the precise statement of our result. Note that, given a positive integer $m$, and $0<\alpha<1$, we write $C^{m,\alpha}$ for the corresponding H\"older space. 
\begin{theorem}\label{MMMAIN}
Let $D_1,\dots,D_n\subset \C$ be open bounded domains with $C^{1,\alpha}$ boundary, where $0<\alpha<1$. Consider the product domain $\Omega=D_1\times\dots\times D_n$. Let $f=f_1d\bar{z}_1+\dots +f_nd\bar{z}_n$ be a $\bar\partial$-closed $(0,1)$ form on $\Omega$ with components $f_j\in C^{n-1,\alpha}(\Omega)$. There exists ${\bf T}f\in C^{1,\alpha}(\Omega)$ solving
\begin{equation*}
\bar\partial {\bf T}f=f  \text{ in }\Omega
\end{equation*}
and satisfying the supnorm estimate 
\begin{equation*}
\norm{{\bf T}f}_{L^{\infty}(\Omega)}\leq C\norm{f}_{L^{\infty}(\Omega)}
\end{equation*}
for a constant $C$ independent of $f$.
\end{theorem}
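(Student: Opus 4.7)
The plan is to solve $\bar\partial u = f$ one component at a time, using one-dimensional Cauchy--Green integrals as the basic building blocks. For a bounded domain $D\subset\mathbb C$ with $C^{1,\alpha}$ boundary, the solid Cauchy transform
\[
T_D g(z)\;=\;\frac{1}{2\pi i}\int_D\frac{g(\zeta)}{\zeta-z}\,d\zeta\wedge d\bar\zeta
\]
solves $\partial_{\bar z}T_Dg=g$, is bounded $L^\infty(D)\to L^\infty(D)$, and gains one H\"older derivative. Let $T_j$ denote $T_{D_j}$ acting in the $j$-th coordinate with the remaining coordinates held as parameters; since its kernel depends only on $\zeta_j$ and $z_j$, the operator $T_j$ commutes with $\partial_{\bar z_k}$ for every $k\ne j$.

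Setting $u_0=0$, I would define recursively
\[
u_k\;=\;u_{k-1}+T_k\bigl(f_k-\partial_{\bar z_k}u_{k-1}\bigr),\qquad k=1,\dots,n,
\]
and put $\mathbf T f:=u_n$. A direct induction shows $\partial_{\bar z_j}u_k=f_j$ for all $j\le k$: at the inductive step the correction $g_k:=f_k-\partial_{\bar z_k}u_{k-1}$ is holomorphic in $z_1,\dots,z_{k-1}$ because, for $j<k$,
\[
\partial_{\bar z_j}g_k\;=\;\partial_{\bar z_j}f_k-\partial_{\bar z_k}f_j\;=\;0
\]
by $\bar\partial$-closedness combined with the induction hypothesis, and then $T_k$ commutes through the already-solved equations while adjusting only the $\partial_{\bar z_k}$-component. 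This proves $\bar\partial\mathbf T f=f$; the regularity $\mathbf T f\in C^{1,\alpha}(\Omega)$ will then follow from the one-variable smoothing properties of the $T_j$ once the formula is put in the reduced form described below.

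The main obstacle is the supnorm estimate: the iterative formula formally involves up to $n-1$ derivatives of $f$, yet the target constant must depend only on $\norm{f}_{L^\infty(\Omega)}$. To address this, I would unfold the recursion into an explicit sum of compositions of $T_j$'s and $\partial_{\bar z_k}$'s, and then apply the Cauchy--Pompeiu identity $T_k\partial_{\bar z_k}h = h - P_k h$, where
\[
P_k h(z)\;=\;\frac{1}{2\pi i}\oint_{\partial D_k}\frac{h(\zeta)}{\zeta_k-z_k}\,d\zeta_k
\]
is the one-variable Cauchy boundary integral (well-defined thanks to the $C^{1,\alpha}$ boundary and the regularity of $f$). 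Exchanging $\partial_{\bar z_j}f_k$ with $\partial_{\bar z_k}f_j$ via $\bar\partial$-closedness and absorbing each $\partial_{\bar z_k}$ into a $P_k$, one should be able to rewrite $\mathbf T f$ as a finite sum of products of $T_j$'s and $P_j$'s applied directly to the components $f_k$, with no surviving derivatives; a warm-up calculation for $n=2$ already yields the closed form $\mathbf T f = T_1 f_1 + T_2 P_1 f_2$. Since each such factor maps $L^\infty\to L^\infty$ boundedly, with operator norm depending only on the geometry of the $D_j$, the estimate $\norm{\mathbf T f}_{L^\infty(\Omega)}\le C\norm{f}_{L^\infty(\Omega)}$ follows, and the $C^{1,\alpha}$ gain in any direction $z_k$ follows from the smoothing of the outermost $T_k$ in each term.
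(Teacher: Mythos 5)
Your iterative construction of a solution is correct: the recursion $u_k=u_{k-1}+T_k(f_k-\partial_{\bar z_k}u_{k-1})$ does produce a solution of $\bar\partial u=f$, and, after applying $\bar\partial$-closedness and the commutativity of the $T_j$, it unfolds exactly to the operator ${\bf T}f$ defined in the paper (for instance, in $n=2$ the recursion gives $T^1f_1+T^2f_2-T^1T^2(\partial_{\bar z_1}f_2)$, and by Cauchy--Pompeiu this equals $T^1f_1+T^2P_1f_2$, just as you say). The gap is in the last step, where you claim that ``each such factor maps $L^\infty\to L^\infty$ boundedly.'' This is false for the boundary Cauchy operators $P_j$. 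The boundary integral
\[
P_1h(z_1,\cdot)=\frac{1}{2\pi i}\oint_{\partial D_1}\frac{h(\zeta_1,\cdot)}{\zeta_1-z_1}\,d\zeta_1
\]
does \emph{not} satisfy $\sup_{z_1\in D_1}\lvert P_1h(z_1,\cdot)\rvert\le C\lVert h\rVert_\infty$: its boundary values, by the Plemelj formula, contain the Hilbert transform of $h$, which is unbounded on $L^\infty$. Concretely, taking $D_1$ the unit disc and $h_\epsilon$ a smooth regularization of a jump with $\lVert h_\epsilon\rVert_\infty\le 1$, one has $\lVert P_1h_\epsilon\rVert_{L^\infty(D_1)}\gtrsim\log(1/\epsilon)$. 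So while the closed form $T^1f_1+T^2P_1f_2$ is a correct \emph{formula}, it does not by itself yield the supnorm estimate, because one of the factors is not an $L^\infty$-bounded operator; the boundedness of the full expression depends essentially on the hidden constraint that $f$ is $\bar\partial$-closed, which your proposed final step no longer uses.

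The paper circumvents exactly this point. It does not try to bound the ``separated'' kernel $\prod_j(\zeta_j-z_j)^{-1}$ over $\partial D_{j_1}\times\dots\times D_n$. Instead, it first rewrites that kernel via the algebraic identity of Lemma~\ref{decompositioncomplexnumbers},
\[
\frac{1}{\prod_j(\zeta_j-z_j)}=\sum_{k}\frac{\prod_{l\neq k}(\overline{\zeta_l-z_l})}{(\zeta_k-z_k)\,G},
\]
uses the $\bar\partial$-closedness of $f$ to pair each summand with the corresponding mixed derivative of a different component $f_k$, and only then integrates by parts (Stokes). The resulting boundary integrands carry the extra factor $\prod_{l\ne k}(\overline{\zeta_l-z_l})/G$, which brings the singularity down to the integrable range treated by Lemmas~\ref{lemma1} and~\ref{lemma2}; the derivative-free interior integrands that appear after differentiating the kernel are likewise controlled by \eqref{easy} and the same lemmas. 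In short: your rewriting eliminates derivatives of $f$, but at the cost of reintroducing the genuinely singular boundary Cauchy operator $P_j$; the paper's rewriting also eliminates derivatives of $f$ but replaces the bare Cauchy kernel with the tamed $g_z$-kernels, and it is only these that admit a uniform $L^\infty$ bound. To repair your argument you would need to replace the assertion that $P_j$ is $L^\infty$-bounded with an analysis of the combined operator, exploiting $\bar\partial$-closedness before (not after) the final estimate, which in effect brings you back to the paper's kernel decomposition.
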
 


Our formula for the operator ${\bf T}$ is inspired by the recent work of Chen and McNeal on product domains in $\C^2$ \cite{CM}. The novelty of our approach is the use of purely one-variable methods. \footnote{After the first version of this paper appeared on the arXiv, the preprint \cite{CM19} was posted by Chen and McNeal. In that paper, they also obtain the formula for the solution operator {\bf T} on a product domain $\Omega$ in $\C^n$, and they essentially prove norm estimates from the space $W^{n-1,p}(\Omega)$ to $L^p(\Omega)$, for $p\in [1,\infty]$. }

The paper is organized as follows. Section \ref{S1} contains the necessary background material from the one-variable theory. In Sections \ref{sectionC2} and \ref{sectionC3} we present detailed proofs of Theorem \ref{MMMAIN} in the special cases of dimension two and three respectively. The two-dimensional case gives a simple proof of Henkin's results \cite{H71}, while the three-dimensional situation provides the insight to deal with the general case. The full proof of Theorem \ref{MMMAIN} is carried out in Section \ref{sectionCn}.  In Section \ref{S6} we define a new operator, denoted by ${\bf \widetilde{T}}$, which extends ${\bf T}$ to a wider class of $(0,1)$ forms. We discuss the possible role of ${\bf \widetilde{T}}$ in the context of weak solutions to $\bar\partial$ with supnorm estimates on product domains.
\section{Preliminaries}\label{S1}
Let $D\subset\C$ be an open bounded domain. For an integrable function $f$ on $D$ define  
$$Tf(z):=-\frac{1}{2\pi i}\int_D\frac{f(\zeta)}{\zeta-z}\, d\bar\zeta\wedge d\zeta.$$
We summarize here some well-known facts about this integral operator. For additional information we refer to the classic paper \cite{NW} for the case of the complex disc and to \cite{V} for general domains.
\begin{proposition}\cite[Theorem 1.32]{V}\label{old}
The following hold:
\begin{itemize}
\item $\partial_{\bar{z}} Tf=f$ in weak sense for any $f\in L^1(D)$.
\item Assume that $D$ has $C^{m+1,\alpha}$ boundary and $f\in C^{m,\alpha}(D)$ for some $0<\alpha<1$ and $m\geq 0$. Then $Tf\in C^{m+1,\alpha}(D)$ and $T\colon C^{m,\alpha}(D)\to C^{m+1,\alpha}(D)$ is a continuous linear operator. Moreover, there exists a linear bounded operator $\Pi\colon C^{m,\alpha}(D)\to C^{m,\alpha}(D)$, defined by a singular integral, such that $\Pi f=\partial_z Tf$ for every $f\in C^{m,\alpha}(D)$.
\end{itemize}
\end{proposition}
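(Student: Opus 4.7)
The plan is to handle the two bullets separately. For the weak-derivative claim, the key input is that $-\frac{1}{2\pi i(\zeta-z)}$, viewed as a convolution kernel on $\C$, is a fundamental solution of $\partial_{\bar z}$. To verify this, I would fix a test function $\varphi\in C^\infty_c(\C)$, apply Stokes' theorem to the one-form $\frac{\varphi(z)}{z-\zeta_0}\,dz$ on $\C\setminus B_\varepsilon(\zeta_0)$, and pass to the limit $\varepsilon\to 0$: the interior piece produces $\int \frac{\partial_{\bar z}\varphi(z)}{z-\zeta_0}\, dA$ while the small boundary circle contributes $-2\pi i\,\varphi(\zeta_0)$. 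Combining this identity with Fubini applied to the definition of $Tf$ (after extending $f$ by zero outside $D$) yields $\langle\partial_{\bar z}Tf,\varphi\rangle = \langle f,\varphi\rangle$ for every $\varphi\in C^\infty_c(D)$, which is the weak equation.

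For the second bullet, the plan is to define the companion operator $\Pi$ as the principal-value singular integral
\[
\Pi f(z) := -\frac{1}{2\pi i}\,\mathrm{p.v.}\!\int_D \frac{f(\zeta)}{(\zeta-z)^2}\,d\bar\zeta\wedge d\zeta,
\]
i.e.\ the Beurling--Ahlfors transform restricted to $D$, and to show both that $\Pi f=\partial_z Tf$ and that $\Pi$ is bounded $C^{m,\alpha}(D)\to C^{m,\alpha}(D)$. For the base case $m=0$, I would use the cancellation $\mathrm{p.v.}\int_{|\zeta-z|<r}(\zeta-z)^{-2}\,d\bar\zeta\wedge d\zeta = 0$ on any disk contained in $D$ to rewrite the integrand as $[f(\zeta)-f(z)](\zeta-z)^{-2}$, which is absolutely integrable when $f\in C^{0,\alpha}$, and then directly estimate the H\"older modulus of the resulting integral by splitting $\Pi f(z_1)-\Pi f(z_2)$ into near-diagonal and far-diagonal regions. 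For the inductive step $m\geq 1$, the identity $(\zeta-z)^{-2}=-\partial_\zeta(\zeta-z)^{-1}$ and integration by parts in $\zeta$ split $\Pi f$ into a Cauchy--Green integral of $\partial_\zeta f \in C^{m-1,\alpha}$ plus a Cauchy-type boundary integral of $f$ over $\partial D$; the first is controlled by the inductive hypothesis together with bullet (1), and the second by the classical Plemelj--Privalov theorem on H\"older continuity of Cauchy integrals along $C^{m+1,\alpha}$ curves.

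With $\Pi$ in hand, the identity $\Pi f=\partial_z Tf$ follows from differentiating $Tf$ under the integral sign on the region $\{|\zeta-z|>\varepsilon\}$ and passing to the limit $\varepsilon\to 0$, noting that the contribution of the small excluded disk vanishes thanks to the cancellation above. Combined with $\partial_{\bar z}Tf=f\in C^{m,\alpha}$ from bullet (1) and a uniform $C^0$-bound on $Tf$ coming from the integrability of $|\zeta-z|^{-1}$ on the bounded set $D$, this gives $Tf\in C^{m+1,\alpha}(D)$ along with continuity of $T\colon C^{m,\alpha}(D)\to C^{m+1,\alpha}(D)$.

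The main obstacle is the H\"older estimate for the Cauchy-type boundary integral appearing in $\Pi$. Unlike the weakly singular $T$, whose kernel is locally integrable, the kernel $(\zeta-z)^{-2}$ is only meaningful as a principal value, and its behavior as $z$ approaches $\partial D$ is exactly what dictates the hypothesis $\partial D\in C^{m+1,\alpha}$ (one degree better than the target regularity $C^{m,\alpha}$ of $f$). Proving the Plemelj--Privalov-type estimates requires a local $C^{m+1,\alpha}$ parametrization of $\partial D$ followed by a careful estimation of the H\"older seminorm of a one-dimensional principal-value singular integral on that parametrization, and this is where the technical core of the argument (as carried out in \cite{V}) resides.
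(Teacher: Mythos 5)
The paper does not prove this proposition: it is quoted verbatim from Vekua \cite[Theorem 1.32]{V}, so there is no in-paper argument to compare against. Your outline is the standard classical proof of that cited result --- Cauchy--Pompeiu/Stokes plus Fubini for the weak identity $\partial_{\bar z}Tf=f$, and the Beurling-type principal-value operator $\Pi$ with the cancellation trick, integration by parts in $\zeta$, and Plemelj--Privalov for the H\"older regularity --- and it is sound as a sketch. The only caveat worth noting is that you correctly identify, but do not carry out, the genuinely technical step: the cancellation $\mathrm{p.v.}\int_{B_r(z)}(\zeta-z)^{-2}\,d\bar\zeta\wedge d\zeta=0$ requires $B_r(z)\subset D$, so near $\partial D$ the splitting must be replaced by the boundary-integral representation and the $C^{m+1,\alpha}$ parametrization estimates, which is exactly the content delegated to \cite{V}.
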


We exploit the one-dimensional theory in a very natural way to study solutions to the operator $\bar\partial$ on product domains $\Omega$ in $\C^n$. Let $\Omega=D_1\times\dots\times D_n$ for open bounded domains $D_j\subset\C$. For each $k=1,\dots,n$ we define the ``slice" operator $T^k$ on integrable functions $f\in L^1(\Omega)$ by
\begin{equation}\label{sliceop}
T^kf(z):=-\frac{1}{2\pi i}\int_{D_k}\frac{f(z_1,\dots,\zeta_k,\dots, z_n)}{\zeta_k-z_k}\, d\bar\zeta_k\wedge d\zeta_k.
\end{equation}
Using the operators $T^k$ we will give an integral formula for a solution of the $\bar\partial$-equation in $\Omega$ that satisfies a supnorm estimate. The key observation is that, by Proposition \ref{old}, 
\begin{equation}\label{funda}
f\in C^{1,\alpha}(\Omega) \text{ implies }\partial_{\bar{z}_k}T^kf=f \text{ in }\Omega. 
\end{equation}
 
 In our arguments, we will use repeatedly the two following elementary lemmas, of which, for the sake of completeness, we provide the proofs.

\begin{lemma}\label{lemma1}
Let $D\subset\C$ be a an open bounded domain and let $\alpha<2$. There exists a constant $C$ such that 
\begin{equation*}
\int_D \frac{|d\bar{\zeta}\wedge d\zeta|}{|\zeta-z|^{\alpha}}\leq C
\end{equation*}
holds for every $z\in D$.
\end{lemma}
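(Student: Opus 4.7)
The plan is to reduce the integral to one over a disc centered at $z$ and then pass to polar coordinates. First I would observe that the measure $|d\bar\zeta \wedge d\zeta|$ is just (a constant multiple of) Lebesgue area measure $dA$ on $\C$, so the statement amounts to an estimate on an ordinary area integral.

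Next, since $D$ is bounded there exists $R>0$ such that $D$ is contained in a disc of radius $R$; in particular, for every $z \in D$ the domain $D$ is contained in the disc $B(z, 2R)$ centered at $z$ with radius $2R$. Positivity of the integrand then gives
\begin{equation*}
\int_D \frac{|d\bar\zeta \wedge d\zeta|}{|\zeta-z|^\alpha} \leq \int_{B(z,2R)} \frac{|d\bar\zeta \wedge d\zeta|}{|\zeta-z|^\alpha}.
\end{equation*}
Now I would change to polar coordinates $\zeta - z = r e^{i\theta}$ on the right-hand side, turning the integral into a constant multiple of $\int_0^{2R} r^{1-\alpha}\, dr$, which is finite precisely because $\alpha < 2$, and yields the explicit bound $C = C(R,\alpha)$ independent of $z$.

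There is essentially no obstacle here: the lemma is a textbook consequence of the local integrability of $|\zeta|^{-\alpha}$ in the plane for $\alpha<2$. The only (trivial) care needed is to translate the singularity to the origin and to dominate $D$ by a disc whose radius does not depend on $z\in D$.
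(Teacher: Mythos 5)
Your proof is correct and follows essentially the same approach as the paper: dominate the integral over $D$ by one over a disc of $z$-independent radius containing $D$, then pass to polar coordinates centered at $z$ and use $\alpha<2$ to ensure $\int_0^{2R} r^{1-\alpha}\,dr<\infty$. Your variant of centering the enlarged disc directly at $z$ (rather than splitting $B_{2R}(0)$ into $B_R(z)$ and its complement as the paper does) is a minor streamlining of the same argument.
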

\begin{proof}
Let $B_R(0)$ be a ball of radius $R>0$ centered at the origin such that $D\subset B_R(0)$. Fix $z\in D$. Then 
\begin{equation*}
\begin{split}
\int_D \frac{|d\bar{\zeta}\wedge d\zeta|}{|\zeta-z|^{\alpha}} \leq \int_{B_{2R}(0)} \frac{|d\bar{\zeta}\wedge d\zeta|}{|\zeta-z|^{\alpha}}&=\int_{B_R(z)} \frac{|d\bar{\zeta}\wedge d\zeta|}{|\zeta-z|^{\alpha}}+\int_{B_{2R}(0)\setminus B_R(z)} \frac{|d\bar{\zeta}\wedge d\zeta|}{|\zeta-z|^{\alpha}}\\ &\leq \int_{0}^{2\pi}\int_{0}^{R}\frac{2r drd\theta}{r^{\alpha}}+\int_{B_{2R}(0)}\frac{|d\bar{\zeta}\wedge d\zeta|}{R^{\alpha}}=\frac{4\pi R^{2-\alpha}}{2-\alpha}+2\pi R^{2-\alpha}.
\end{split}
\end{equation*}
Note that in the last equality we have used the hypothesis $\alpha<2$.
\end{proof}
\begin{lemma}\label{lemma2}
Let $D\subset\C$ be an open bounded domain with $C^1$ boundary $\partial D$ and let $\alpha<1$. There exists a constant $C$ such that 
\begin{equation*}
\int_{\partial D}\frac{|d\zeta|}{|\zeta-z|^{\alpha}}\leq C
\end{equation*}
holds for every $z\in D$.
\end{lemma}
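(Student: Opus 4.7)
The plan is to use the $C^1$ regularity of $\partial D$ to reduce the bound to a one-dimensional integral of the form $\int_I|x-a|^{-\alpha}\,dx$ on a bounded interval $I$, which is uniformly bounded in $a\in\R$ precisely because $\alpha<1$. Since $\partial D$ is a compact $C^1$ $1$-manifold, the implicit function theorem yields at each $p\in\partial D$ a neighborhood $V_p$ and a rotation of Euclidean coordinates so that $\partial D\cap V_p$ is the graph of a $C^1$ function $\phi_p\colon I_p\to\R$ over a bounded interval $I_p$. By compactness of $\partial D$ I would extract a finite subcover $V_1,\dots,V_N$.

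Since the integrand is nonnegative and $\partial D\subset\bigcup_j V_j$, it suffices to bound each piece $\int_{\partial D\cap V_j}|\zeta-z|^{-\alpha}|d\zeta|$ uniformly in $z\in D$ and sum over $j$. Fix $j$, work in the rotated coordinates, and write $\zeta=(x,\phi_j(x))$ with $z=(a,b)$. The key estimate is the trivial pointwise inequality
\begin{equation*}
|\zeta-z|^2=(x-a)^2+(\phi_j(x)-b)^2\geq (x-a)^2,
\end{equation*}
which yields $|\zeta-z|^{-\alpha}\leq|x-a|^{-\alpha}$. Since $\phi_j\in C^1(\overline{I_j})$, the length element $|d\zeta|=\sqrt{1+\phi_j'(x)^2}\,dx$ is bounded above by a constant $M_j$ on $I_j$, and so
\begin{equation*}
\int_{\partial D\cap V_j}\frac{|d\zeta|}{|\zeta-z|^{\alpha}}\leq M_j\int_{I_j}\frac{dx}{|x-a|^{\alpha}}.
\end{equation*}

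For any bounded interval $I_j$ and any $a\in\R$, an elementary computation (translate so that $a=0$ and worst-case by placing the singularity at the midpoint of $I_j$) shows
\begin{equation*}
\int_{I_j}\frac{dx}{|x-a|^{\alpha}}\leq\frac{2^{\alpha}|I_j|^{1-\alpha}}{1-\alpha},
\end{equation*}
a constant depending only on $|I_j|$ and $\alpha$, never on $z$. Summing the $N$ such bounds produces the desired constant $C$. The assumption $\alpha<1$ enters exactly to secure integrability of the singular factor $|x-a|^{-\alpha}$, while the $C^1$ hypothesis is used only to produce local graph charts with a bounded length element. There is no real obstacle here beyond bookkeeping the finite cover; this is the boundary analogue of Lemma \ref{lemma1}, and the loss of one power in the allowed exponent (from $\alpha<2$ to $\alpha<1$) reflects the drop from a planar to a curvilinear domain of integration.
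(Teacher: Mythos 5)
Your proof is correct and reaches the bound by the same core mechanism as the paper, namely reducing to the elementary one-dimensional integral $\int_I |x-a|^{-\alpha}\,dx$, but your execution is cleaner and in fact more careful than the paper's. The paper distinguishes two cases according to whether $z$ lies in the ball cover of $\partial D$ or not; this requires establishing a positive lower bound $\epsilon$ for $\dist(\partial D, D\setminus\cup_j B_j)$ and then, in the covered case, asserting $|z-\zeta|>\epsilon$ for boundary points $\zeta$ in the other balls, a claim that as stated can fail when $z$ lies where several $B_j$ overlap. The paper also writes $\partial D\cap B_1$ as literally a segment of the real axis, which holds only when that piece of boundary is flat; the correct statement requires a graph parameterization, as you use. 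Your argument sidesteps both issues: writing $\zeta=(x,\phi_j(x))$ in the rotated chart and observing $|\zeta-z|^2\geq (x-a)^2$ for $z=(a,b)$, an inequality that holds regardless of whether $z$ is anywhere near the chart $V_j$, you obtain a bound $M_j\int_{I_j}|x-a|^{-\alpha}\,dx\leq M_j\,2^{\alpha}|I_j|^{1-\alpha}/(1-\alpha)$ with no case split at all. The $C^1$ hypothesis enters only to bound $\sqrt{1+(\phi_j')^2}$ on the compact $\overline{I_j}$, and the restriction $\alpha<1$ enters only to make the one-dimensional integral converge. This is exactly the right level of care.
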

\begin{proof}
First cover $\partial D$ with balls $B_j$ of the same radius $\delta>0$ whose centers belong to $\partial D$. Let $\delta$ be sufficiently small so that in each ball $B_j$ the boundary $\partial D$ is the graph of a $C^1$ function. Since $\partial D$ is compact, we can extract a finite subcover of $\partial D$, say $B_1,\dots,B_m$. Note that there exists $\epsilon>0$ such that 
\begin{equation}\label{dist}
\dist\big(\partial D, D\setminus \cup_{j=1}^m B_j\big)>\epsilon.
\end{equation} 
In fact, assume by contradiction that there exists a sequence of boundary points $\zeta_k$ and a sequence $z_k$ of points of $D$ such that $|\zeta_k-z_k|\to 0$ as $k\to\infty$. Since $\partial D$ is compact, there exists a subsequence $\zeta_{k_n}$ converging to a boundary point $\zeta_0$. By the triangle inequality, we have $|z_{k_n}-\zeta_0|\leq |z_{k_n}-\zeta_{{k}_n}|+|\zeta_{{k}_n}-\zeta_0|$. Hence $z_{k_n}\to \zeta_0$ as $n\to\infty$. This is absurd, since there exists a neighborhood of $\zeta_0$ entirely contained in $\cup_{j=1}^mB_j$. Now that we have proved \eqref{dist}, we consider two separate cases.

CASE 1: If $z\in D\setminus \cup_{j=1}^m B_j$, then $|\zeta-z|\geq \epsilon$ for all $\zeta\in\partial D$. Hence
\[\int_{\partial D}\frac{|d\zeta|}{|\zeta-z|^{\alpha}}\leq \frac{1}{\epsilon^{\alpha}}|\partial D|,\]
where $|\partial D|$ denotes the measure of $\partial D$. 

CASE 2: If $z\in\cup_{j=1}^m B_j$, then without loss of generality assume $z \in B_1$. We have
\begin{equation*}
\int_{\partial D}\frac{|d\zeta|}{|\zeta-z|^{\alpha}}=\sum_{j=1}^m\int_{\partial D\cap B_j}\frac{|d\zeta|}{|\zeta-z|^{\alpha}}.
\end{equation*}
Note, for $\zeta\in\partial D\cap B_j$ with $j=2,\dots,m$, that $|z-\zeta|>\epsilon$, and thus
\begin{equation*}
\int_{\partial D\cap B_j}\frac{|d\zeta|}{|\zeta-z|^{\alpha}}\leq \frac{1}{\epsilon^{\alpha}}|\partial D|,\quad j=2,\dots,m.
\end{equation*}
It is therefore enough to estimate the integral 
\begin{equation*}
\int_{\partial D\cap B_1}\frac{|d\zeta|}{|\zeta-z|^{\alpha}}.
\end{equation*}
We choose a coordinate system such that the ball $B_1$ is centered at the origin and $\partial D\cap B_1$ is a subset of the real axis. More specifically, we can achieve $\partial D\cap B_1=\{z=x+iy\in\C\,\vert\, y=0, -\delta\leq x \leq \delta\}$. Hence 
\begin{equation*}
\int_{\partial D\cap B_1}\frac{|d\zeta|}{|\zeta-z|^{\alpha}}=\int_{-\delta}^{\delta}\frac{dx}{|x-z|^{\alpha}}\leq \int_{-\delta}^{\delta}\frac{dx}{|x-\Realp z|^{\alpha}},
\end{equation*}
where the inequality follows from $|x-z|^2=(x-\Realp z)^2+(\Imp z)^2\geq |x-\Realp z|^2$. Letting $v=x-\Realp z$, 
\begin{equation*}
 \int_{-\delta}^{\delta}\frac{dx}{|x-\Realp z|^{\alpha}}=\int_{-\delta-\Realp z}^{\delta-\Realp z}\frac{dv}{|v|^{\alpha}}=\int_{-\delta-\Realp z}^{0}\frac{dv}{(-v)^{\alpha}}+\int_{0}^{\delta-\Realp z}\frac{dv}{v^{\alpha}}=\frac{(1+\Realp z)^{1-\alpha}}{1-\alpha}+\frac{(1-\Realp z)^{1-\alpha}}{1-\alpha}.
\end{equation*}
In the last equality we have used the hypothesis $\alpha<1$. Since here $|z|<\delta$, we obtain the estimate
\begin{equation*}
\int_{\partial D\cap B_1}\frac{|d\zeta|}{|\zeta-z|^{\alpha}}\leq \frac{2(1+\delta)^{1-\alpha}}{1-\alpha},
\end{equation*}
which concludes the proof.
\end{proof}

\section{Product domains in $\C^2$}\label{sectionC2}
Let $D_1,D_2\subset\C$ be open bounded domains with $C^{1,\alpha}$ boundary, where $0<\alpha<1$. Consider the product domain $\Omega=D_1\times D_2\subset\C^2$.
For a $(0,1)$ form $f=f_1d\bar{z}_1+f_2d\bar{z}_2$ on $\Omega$ with components $f_1,f_2\in C^{1,\alpha}(\Omega),$ we define
\begin{equation*}
{\bf T}f:=T^1f_1+T^2f_2-T^1T^2(\partial_{\bar{z}_1}f_2).
\end{equation*}
\begin{remark}
Note that $f_2\in C^{1,\alpha}(\Omega)$ implies $\partial_{\bar{z}_1}f_2\in C^{\alpha}(\Omega)$, and therefore ${\bf T}f$ is well defined.
\begin{lemma}\label{P} ${\bf T}f\in C^{1,\alpha}(\Omega)$.
\end{lemma}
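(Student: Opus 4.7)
The plan is to treat the three summands of $\mathbf{T}f = T^1 f_1 + T^2 f_2 - T^1 T^2(\partial_{\bar z_1}f_2)$ separately and prove that each lies in $C^{1,\alpha}(\Omega)$. Two recurring observations reduce joint H\"older regularity on $\Omega$ to one-variable statements from Proposition~\ref{old}: (a) the kernel $1/(\zeta_k-z_k)$ of $T^k$ is independent of $z_j$ for $j\ne k$, so $\partial_{z_j}$ and $\partial_{\bar z_j}$ commute with both $T^k$ and $\Pi^k$; and (b) $T^k$ is bounded on $L^\infty(\Omega)$ by Lemma~\ref{lemma1} with exponent one.

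For $T^1 f_1$ (and symmetrically $T^2 f_2$), Proposition~\ref{old} with $m=0$ already gives slicewise $C^{1,\alpha}(D_1)$ regularity uniform in $z_2$. I would then verify each of the four first derivatives lies in $C^\alpha(\Omega)$. First, $\partial_{\bar z_1}T^1 f_1 = f_1 \in C^{1,\alpha}$ is immediate. The mixed derivatives $\partial_{z_2}T^1 f_1 = T^1\partial_{z_2}f_1$ and $\partial_{\bar z_2}T^1 f_1 = T^1\partial_{\bar z_2}f_1$ (by commutation) are slicewise $C^{1,\alpha}$ in $z_1$ by Proposition~\ref{old}, and their H\"older continuity in $z_2$ follows from observation (b): $|T^1\partial_{z_2}f_1(z_1,z_2)-T^1\partial_{z_2}f_1(z_1,z_2')|\le C\|\partial_{z_2}f_1(\cdot,z_2)-\partial_{z_2}f_1(\cdot,z_2')\|_{L^\infty(D_1)}\le C|z_2-z_2'|^\alpha$. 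Finally, $\partial_{z_1}T^1 f_1 = \Pi^1 f_1$ is slicewise $C^\alpha(D_1)$ by Proposition~\ref{old}, and jointly Lipschitz in $z_2$ because $\partial_{z_2}\Pi^1 f_1 = \Pi^1\partial_{z_2}f_1 \in L^\infty(\Omega)$ (the slicewise $C^\alpha$-bound from Proposition~\ref{old} applied to $\partial_{z_2}f_1(z_1,\cdot)$ yields a uniform $L^\infty$-bound in $z_1$).

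For the third summand, set $g:=\partial_{\bar z_1}f_2 \in C^\alpha(\Omega)$ and iterate the preceding. First $T^2 g \in C^\alpha(\Omega)$, and moreover $T^2 g$ is jointly Lipschitz in $z_2$, since $\partial_{\bar z_2}T^2 g = g$ and $\partial_{z_2}T^2 g = \Pi^2 g$ are both uniformly bounded on $\Omega$. Then $T^1 T^2 g$ is slicewise $C^{1,\alpha}(D_1)$ in $z_1$; of its four first derivatives, $\partial_{\bar z_1}T^1 T^2 g = T^2 g$, $\partial_{\bar z_2}T^1 T^2 g = T^1 g$, and $\partial_{z_2}T^1 T^2 g = T^1\Pi^2 g$ all lie in $C^\alpha(\Omega)$ by the same $L^\infty$-boundedness argument as in the previous step. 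The main obstacle is the remaining derivative $\partial_{z_1}T^1 T^2 g = \Pi^1 T^2 g$: because $\Pi^1$ is not bounded on $L^\infty(D_1)$, its H\"older continuity in $z_2$ cannot be deduced directly from (b). I plan to resolve this by H\"older interpolation. The difference $\phi(\zeta_1):=T^2 g(\zeta_1,z_2)-T^2 g(\zeta_1,z_2')$ satisfies $\|\phi\|_{L^\infty(D_1)}=O(|z_2-z_2'|)$ (from Lipschitz-in-$z_2$ of $T^2 g$) and $[\phi]_{C^\alpha(D_1)}=O(1)$ (from the joint $C^\alpha$-regularity of $T^2 g$). The interpolation inequality $[\phi]_{C^\beta}\le C\|\phi\|_\infty^{1-\beta/\alpha}[\phi]_{C^\alpha}^{\beta/\alpha}$ with $\beta:=\alpha(1-\alpha)$ yields $\|\phi\|_{C^\beta(D_1)}=O(|z_2-z_2'|^\alpha)$, and the $C^\beta$-boundedness of $\Pi^1$ from Proposition~\ref{old} then gives $|\Pi^1 T^2 g(z_1,z_2)-\Pi^1 T^2 g(z_1,z_2')|\le C|z_2-z_2'|^\alpha$, closing the argument.
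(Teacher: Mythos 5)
Your proof follows the same overall strategy as the paper's---compute all four first partials of $T^1T^2(\partial_{\bar z_1}f_2)$ and verify each lies in $C^\alpha(\Omega)$---but you supply the joint-regularity details that the paper leaves implicit. The paper simply asserts ``$\Pi^1T^2(\partial_{\bar z_1}f_2)\in C^\alpha(\Omega)$'' by citing Proposition~\ref{old}, which by itself only gives slicewise regularity in $z_1$ (uniform in $z_2$); passing to joint H\"older regularity, in particular H\"older continuity in $z_2$, requires a further argument because $\Pi^1$ is not $L^\infty(D_1)$-bounded. Your interpolation step is precisely the missing piece, and it is correct: $\|\phi\|_{L^\infty(D_1)}=O(|z_2-z_2'|)$ from the Lipschitz bound $\partial_{z_2}T^2g=\Pi^2g\in L^\infty$, $[\phi]_{C^\alpha(D_1)}=O(1)$ from the joint $C^\alpha$-regularity of $T^2g$, and H\"older interpolation plus $C^\beta$-boundedness of $\Pi^1$ (noting that $D_1$ having $C^{1,\alpha}$ boundary with $\alpha>\beta$ suffices for Proposition~\ref{old} to apply at exponent $\beta$) close the loop.

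There is, however, a small miss. You list $\partial_{z_2}T^1T^2g=T^1\Pi^2 g$ among the derivatives ``handled by the same $L^\infty$-boundedness argument as in the previous step.'' But that argument pairs slicewise $C^{1,\alpha}(D_1)$-regularity (via Proposition~\ref{old} applied to a $C^\alpha(D_1)$-input) with $L^\infty$-boundedness of $T^1$ for the $z_2$-direction. Here the input is $\Pi^2g(\cdot,z_2)$, and nothing you have established shows $\Pi^2g(\cdot,z_2)\in C^\alpha(D_1)$ uniformly in $z_2$---indeed, H\"older continuity of $\Pi^2g$ in $z_1$ is \emph{exactly} the same subtlety you later confront for $\Pi^1T^2g$, since $\Pi^2$ is not $L^\infty(D_2)$-bounded. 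The cleanest fix is to rewrite $T^1\Pi^2g=\Pi^2T^1g$ and run your own interpolation argument in the $\zeta_2$-variable: $T^1g$ is jointly $C^\alpha(\Omega)$ and Lipschitz in $z_1$ (because $\partial_{z_1}T^1g=\Pi^1g$ and $\partial_{\bar z_1}T^1g=g$ are both in $L^\infty(\Omega)$), so for $\psi(\zeta_2):=T^1g(z_1,\zeta_2)-T^1g(z_1',\zeta_2)$ one has $\|\psi\|_{L^\infty(D_2)}=O(|z_1-z_1'|)$, $[\psi]_{C^\alpha(D_2)}=O(1)$, and interpolation gives $\|\Pi^2\psi\|_{L^\infty(D_2)}=O(|z_1-z_1'|^\alpha)$. (Alternatively one may invoke the classical estimate $T\colon L^\infty(D)\to C^\beta(D)$ for every $\beta<1$, coming from $\int_D|z-z'|\,|\zeta-z|^{-1}|\zeta-z'|^{-1}|d\bar\zeta\wedge d\zeta|=O(|z-z'|\log\tfrac{1}{|z-z'|})$, but this is not part of Proposition~\ref{old} as stated.) With that correction your argument is complete, and it is in fact more detailed on the joint-regularity question than the proof in the paper.
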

\begin{proof}
Since $f_1,f_2\in C^{1,\alpha}(\Omega)$, it follows immediately that $T^1f_1, T^2f_2\in C^{1,\alpha}(\Omega)$. We just need to prove that $T^1T^2(\partial_{\bar{z}_1}f_2)\in C^{1,\alpha}(\Omega)$. Since $f_2\in C^{1,\alpha}(\Omega)$, then $\partial_{\bar{z}_1}f_2\in C^{\alpha}(\Omega)$. Hence Proposition \ref{old} implies \begin{equation}\label{11}
\begin{cases}
\partial_{\bar{z}_1}T^1T^2(\partial_{\bar{z}_1}f_2)=T^2(\partial_{\bar{z}_1}f_2)\in C^{\alpha}(\Omega)\\
\partial_{z_1}T^1T^2(\partial_{\bar{z}_1}f_2)=\Pi^1T^2(\partial_{\bar{z}_1}f_2)\in C^{\alpha}(\Omega).
\end{cases}
\end{equation}
Here $\Pi^1$ stands for the operator $\Pi$ applied to the variable $z_1$ (see Proposition \ref{old}). Now note that the operators $T^1$ and $T^2$ commute, by Fubini's theorem. We can thus argue in the same way for the variable $z_2$ and prove that \begin{equation}\label{22}
\begin{cases}
\partial_{\bar{z}_2}T^1T^2(\partial_{\bar{z}_1}f_2)\in C^{\alpha}(\Omega)\\
\partial_{z_2}T^1T^2(\partial_{\bar{z}_1}f_2)\in C^{\alpha}(\Omega).
\end{cases}
\end{equation}
From \eqref{11} and \eqref{22}, we conclude that $T^1T^2(\partial_{\bar{z}_1}f_2)\in C^{1,\alpha}$, as wanted.
\end{proof}
\end{remark}
\begin{lemma}\label{McNeal}
If the form $f$ is $\bar\partial$-closed, then $\bar\partial {\bf T}f=f$ in $\Omega$.
\end{lemma}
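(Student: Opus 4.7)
The plan is to verify the two equations $\partial_{\bar z_1} \mathbf{T}f = f_1$ and $\partial_{\bar z_2} \mathbf{T}f = f_2$ by direct computation, exploiting three basic facts about the slice operators: (i) $\partial_{\bar z_k} T^k g = g$ whenever $g$ is sufficiently regular (which holds for our data by \eqref{funda} and the membership established in Lemma \ref{P}); (ii) if a derivative $\partial_{\bar z_j}$ with $j \neq k$ is applied to $T^k g$, it can be pulled inside the integral, since $T^k$ integrates only in the $\zeta_k$-variable; and (iii) $T^1$ and $T^2$ commute by Fubini's theorem.

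First I would compute $\partial_{\bar z_1} \mathbf{T}f$. The term $T^1 f_1$ yields $f_1$ by (i). The term $T^2 f_2$ yields $T^2(\partial_{\bar z_1} f_2)$ by (ii). For the third term, commuting $T^1$ and $T^2$ is unnecessary here: applying $\partial_{\bar z_1}$ to $T^1$ on the outside and using (i) gives $T^2(\partial_{\bar z_1} f_2)$, which exactly cancels the contribution from $T^2 f_2$. Thus $\partial_{\bar z_1} \mathbf{T}f = f_1$.

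Next I would compute $\partial_{\bar z_2} \mathbf{T}f$. The term $T^1 f_1$ contributes $T^1(\partial_{\bar z_2} f_1)$ by (ii), the term $T^2 f_2$ contributes $f_2$ by (i). For the third term, I would first use (iii) to rewrite $T^1 T^2(\partial_{\bar z_1} f_2) = T^2 T^1(\partial_{\bar z_1} f_2)$ and then apply $\partial_{\bar z_2}$ via (i) to get $T^1(\partial_{\bar z_1} f_2)$. Collecting terms,
\begin{equation*}
\partial_{\bar z_2} \mathbf{T}f = T^1(\partial_{\bar z_2} f_1) + f_2 - T^1(\partial_{\bar z_1} f_2).
\end{equation*}
At this point the $\bar\partial$-closed hypothesis enters: $\partial_{\bar z_2} f_1 = \partial_{\bar z_1} f_2$, so by linearity of $T^1$ the two outer terms cancel and $\partial_{\bar z_2} \mathbf{T}f = f_2$.

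There is no real obstacle here; the only thing to be careful about is that all the manipulations (differentiation under the integral in a transverse variable, applying (i) in the strong sense rather than merely weakly, and commuting $T^1$ with $T^2$) are genuinely justified in the regularity class at hand. This is exactly what Lemma \ref{P} and Proposition \ref{old} were set up to ensure: since $\partial_{\bar z_1} f_2 \in C^\alpha(\Omega)$, the iterated operator $T^1 T^2(\partial_{\bar z_1} f_2)$ is smooth enough that its first-order derivatives can be taken classically and the identities from Proposition \ref{old} apply. Once that is acknowledged, the proof is essentially just three lines of cancellation.
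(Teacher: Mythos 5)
Your proof is correct and mirrors the paper's argument: both compute $\partial_{\bar z_1}\mathbf{T}f$ and $\partial_{\bar z_2}\mathbf{T}f$ directly by combining the reproducing property $\partial_{\bar z_k}T^k g = g$, differentiation under the integral in the transverse variable, commutativity $T^1T^2=T^2T^1$ via Fubini, and the $\bar\partial$-closed relation $\partial_{\bar z_1}f_2=\partial_{\bar z_2}f_1$. The only cosmetic difference is that the paper substitutes $\partial_{\bar z_1}f_2=\partial_{\bar z_2}f_1$ inside the $T^1T^2$ term \emph{before} applying $\partial_{\bar z_2}$, whereas you apply $\partial_{\bar z_2}$ first and invoke the relation only at the final cancellation; the two orderings are clearly equivalent.
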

\begin{proof}
By \eqref{funda}, we have
\[\partial_{\bar{z}_1}{\bf T}f=\partial_{\bar{z}_1}T^1f_1+\partial_{\bar{z}_1}T^2f_2-\partial_{\bar{z}_1}T^1T^2(\partial_{\bar{z}_1}f_2)=f_1+T^2(\partial_{\bar{z}_1}f_2)-T^2(\partial_{\bar{z}_1}f_2)=f_1.\]
Note that $T^1T^2=T^2T^1$ by Fubini's theorem. Moreover, since $f$ is $\bar\partial$-closed, then $\partial_{\bar{z}_1}f_2=\partial_{\bar{z}_2}f_1$. Hence 
\[\partial_{\bar{z}_2}{\bf T}f=\partial_{\bar{z}_2}T^1f_1+\partial_{\bar{z}_2}T^2f_2-\partial_{\bar{z}_2}T^2T^1(\partial_{\bar{z}_2}f_1)=T^1(\partial_{\bar{z}_2}f_1)+f_2-T^1(\partial_{\bar{z}_2}f_1)=f_2.\]
We have thus proved that $\bar\partial {\bf T}f=f$ in $\Omega$. 
\end{proof}
\begin{remark}
Lemma \ref{McNeal} corresponds to \cite[Proposition 0.1]{CM}, where the same result is proved (using methods arising from several complex variables) under slightly weaker assumptions: the boundaries of the domains $D_1$ and $D_2$, as well as the functions $f_1, f_2$, are only required to be $C^1$ instead of $C^{1,\alpha}$.
\end{remark}
\begin{theorem}\label{theoremC2}
Let $D_1,D_2\subset\C$ be open bounded domains with $C^{1,\alpha}$ boundary, where $0<\alpha<1$. Consider the product domain $\Omega=D_1\times D_2$. Let $f=f_1d\bar{z}_1+f_2d\bar{z}_2$ be a $\bar\partial$-closed $(0,1)$ form on $\Omega$ with components $f_1,f_2\in C^{1,\alpha}(\Omega).$ Then ${\bf T}f\in C^{1,\alpha}(\Omega)$ and $\bar\partial {\bf T}f=f$ in $\Omega$. Moreover, ${\bf T}f$ satisfies the supnorm estimate
\begin{equation}\label{estimateC2}
\norm{{\bf T}f}_{L^{\infty}(\Omega)}\leq C\norm{f}_{L^{\infty}(\Omega)}
\end{equation}
for some constant $C$ independent of $f$.
\end{theorem}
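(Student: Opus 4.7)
Lemmas \ref{P} and \ref{McNeal} already establish the $C^{1,\alpha}$ regularity of ${\bf T}f$ and the identity $\bar\partial {\bf T}f=f$, so the only remaining content of Theorem \ref{theoremC2} is the $L^\infty$ bound. The plan is to bound separately each of the three summands in
\[ {\bf T}f = T^1 f_1 + T^2 f_2 - T^1 T^2(\partial_{\bar z_1} f_2). \]

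The first two pieces are routine. For any $g\in L^\infty(\Omega)$, the slice formula \eqref{sliceop} together with Lemma \ref{lemma1} (applied with exponent $1<2$ to the one-dimensional slice) gives $\norm{T^k g}_{L^\infty(\Omega)}\leq C\norm{g}_{L^\infty(\Omega)}$ uniformly in $z$. Applying this to $g=f_1$ and $g=f_2$ handles $T^1 f_1$ and $T^2 f_2$.

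For the third summand, which nominally involves a derivative of $f_2$, my plan is a Cauchy--Pompeiu integration by parts in $\zeta_1$. The key observation is that $\partial_{\bar\zeta_1}[f_2(\zeta_1,\zeta_2)-f_2(z_1,\zeta_2)]=\partial_{\bar\zeta_1} f_2(\zeta_1,\zeta_2)$ while the bracketed difference vanishes at $\zeta_1=z_1$, so the pole of the Cauchy kernel $(\zeta_1-z_1)^{-1}$ is cancelled. Stokes's theorem then yields
\[ T^1(\partial_{\bar z_1} f_2)(z_1,\zeta_2) = -\frac{1}{2\pi i}\int_{\partial D_1}\frac{f_2(\eta_1,\zeta_2)-f_2(z_1,\zeta_2)}{\eta_1-z_1}\,d\eta_1, \]
and composing with $T^2$ produces the mixed boundary-interior representation
\[ T^1 T^2(\partial_{\bar z_1} f_2)(z_1,z_2)=\frac{1}{(2\pi i)^2}\int_{D_2}\int_{\partial D_1}\frac{f_2(\eta_1,\zeta_2)-f_2(z_1,\zeta_2)}{(\eta_1-z_1)(\zeta_2-z_2)}\,d\eta_1\,d\bar\zeta_2\wedge d\zeta_2, \]
in which only the values of $f_2$ appear, no derivatives.

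It then remains to estimate this double integral uniformly. The $\alpha$-Hölder continuity of $f_2$ in its first variable (inherent in $f_2\in C^{1,\alpha}(\Omega)$) gives $|f_2(\eta_1,\zeta_2)-f_2(z_1,\zeta_2)|\leq C|\eta_1-z_1|^\alpha$, so the integrand is dominated by $|\eta_1-z_1|^{\alpha-1}|\zeta_2-z_2|^{-1}$. Lemma \ref{lemma2} (with exponent $1-\alpha<1$) bounds the boundary integral in $\eta_1$ uniformly in $z_1$, and Lemma \ref{lemma1} bounds the area integral in $\zeta_2$; together they give a uniform estimate on the correction term. The delicate point --- and where I expect the main work --- is to arrange the argument so that the final constant depends only on $\Omega$ and $\alpha$, and not implicitly on Hölder norms of $f$, as the theorem's $\norm{f}_{L^\infty}$-bound requires. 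I anticipate that the needed reduction exploits the $\bar\partial$-closedness $\partial_{\bar z_1} f_2=\partial_{\bar z_2} f_1$, which by the same Cauchy--Pompeiu manipulation performed in the second variable produces a symmetric representation ${\bf T}f = T^2 f_2 + T^1(\text{Cauchy integral in }\eta_2\text{ of }f_1)$; combining the two representations (or introducing a near/far splitting of $\partial D_1$ about $z_1$ that balances a Hölder estimate near the pole with a supnorm estimate far from it) should allow one to absorb the Hölder dependence and conclude with a pure supnorm bound.
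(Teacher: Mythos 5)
Your Cauchy--Pompeiu manipulation is correct, and you are right that it produces a derivative-free formula. But the gap you flag at the end --- the dependence on the H\"older seminorm --- is not a detail; it is the whole difficulty, and the devices you sketch do not close it. In your representation
\[
T^1T^2(\partial_{\bar z_1}f_2)(z)=\frac{1}{(2\pi i)^2}\int_{D_2}\int_{\partial D_1}\frac{f_2(\eta_1,\zeta_2)-f_2(z_1,\zeta_2)}{(\eta_1-z_1)(\zeta_2-z_2)}\,d\eta_1\,d\bar\zeta_2\wedge d\zeta_2,
\]
the $\eta_1$-integration is against the kernel $(\eta_1-z_1)^{-1}$ on $\partial D_1$, and $\int_{\partial D_1}|\eta_1-z_1|^{-1}|d\eta_1|$ diverges logarithmically as $z_1\to\partial D_1$. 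You save integrability only by spending the numerator on the H\"older cancellation $|f_2(\eta_1,\zeta_2)-f_2(z_1,\zeta_2)|\le[f_2]_{C^\alpha}|\eta_1-z_1|^\alpha$, which irreducibly introduces $[f_2]_{C^\alpha}$ into the constant. A near/far split of $\partial D_1$ about $z_1$ does not fix this: on the far part you pay $O(\delta^{-1})\norm{f}_\infty$ and on the near part you still need $[f_2]_{C^\alpha}\delta^\alpha$; optimizing in $\delta$ leaves a mixed bound, not a pure supnorm bound. Invoking the other representation (the Cauchy--Pompeiu in $\zeta_2$ applied to $\partial_{\bar z_2}f_1$) gives a second formula with the same logarithmic defect, this time along $\partial D_2$; since the two formulas are equal, averaging them changes nothing, and it is not clear how to use one on part of the integration region and the other elsewhere.

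The paper resolves exactly this issue, but in the opposite order: it decomposes the Cauchy kernel \emph{before} applying Stokes, via the identity \eqref{decompositionC2},
\[
\frac{1}{(\zeta_1-z_1)(\zeta_2-z_2)}=\frac{\overline{\zeta_2-z_2}}{(\zeta_1-z_1)|\zeta-z|^2}+\frac{\overline{\zeta_1-z_1}}{(\zeta_2-z_2)|\zeta-z|^2},
\]
and then uses $\bar\partial$-closedness to write $Df$ as $\partial_{\bar\zeta_2}f_1$ for the first summand and as $\partial_{\bar\zeta_1}f_2$ for the second. After Stokes in, say, $\zeta_1$ applied to the second piece, the boundary integral over $\partial D_1\times D_2$ carries the kernel $\overline{\zeta_1-z_1}/[(\zeta_2-z_2)|\zeta-z|^2]$, whose numerator $\overline{\zeta_1-z_1}$ is precisely what tames the $\partial D_1$ singularity: via \eqref{toep} one gets the factored bound $|\zeta_1-z_1|^{-1/3}|\zeta_2-z_2|^{-5/3}$, to which Lemma~\ref{lemma2} (power $1/3<1$) and Lemma~\ref{lemma1} (power $5/3<2$) apply directly, yielding a constant depending only on $\Omega$. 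The interior term is handled likewise. Your plan is missing this pre-Stokes kernel decomposition, which is the mechanism that trades the H\"older seminorm for pure $\norm{f}_{L^\infty}$ control.
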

\begin{proof}
We showed in Lemma \ref{P} and Lemma \ref{McNeal} that ${\bf T}f\in C^{1,\alpha}(\Omega)$ and $\bar\partial {\bf T}f=f$ in $\Omega$. We now prove the estimate \eqref{estimateC2}. First note that by the definition \eqref{sliceop} of the ``slice" operator $T^j$ and Lemma \ref{lemma1}, there exists a constant $C$ such that $\norm{T^jf_j}_{\infty}\leq C\norm{f_j}_{\infty}$ for $j=1,2$. Hence it is enough to estimate the function \begin{equation}\label{tbeC2}
T^1T^2(Df)=\frac{1}{(2\pi i)^2}\int_{D_1\times D_2}\frac{Df(\zeta_1,\zeta_2)}{(\zeta_1-z_1)(\zeta_2-z_2)}\,d\bar{\zeta}_1\wedge d\zeta_1\wedge d\bar{\zeta}_2\wedge d\zeta_2,
\end{equation}
where $Df:=\partial_{\bar{z}_1}f_2=\partial_{\bar{z}_2}f_1$. 
Recall that $|\zeta-z|^2=|\zeta_1-z_1|^2+|\zeta_2-z_2|^2$. We exploit the following identity:
\begin{equation}\label{decompositionC2}
\frac{1}{(\zeta_1-z_1)(\zeta_2-z_2)}=\frac{(\overline{\zeta_2-z_2})}{(\zeta_1-z_1)|\zeta-z|^2}+\frac{(\overline{\zeta_1-z_1})}{(\zeta_2-z_2)|\zeta-z|^2}.
\end{equation}
Substituting \eqref{decompositionC2} into \eqref{tbeC2} and recalling the two equivalent ways of expressing  $Df$, we write
\begin{equation}\label{here}
\begin{split}
(2\pi i)^2 T^1T^2(Df)&=\int_{D_1\times D_2}\partial_{\bar{\zeta}_2}f_1\frac{(\overline{\zeta_2-z_2})}{(\zeta_1-z_1)|\zeta-z|^2}\,d\bar{\zeta}_1\wedge d\zeta_1\wedge d\bar{\zeta}_2\wedge d\zeta_2\\&+\int_{D_1\times D_2}\partial_{\bar{\zeta}_1}f_2\frac{(\overline{\zeta_1-z_1})}{(\zeta_2-z_2)|\zeta-z|^2}\,d\bar{\zeta}_1\wedge d\zeta_1\wedge d\bar{\zeta}_2\wedge d\zeta_2\\&=I_1+I_2.
\end{split}
\end{equation}
We call $I_1$ and $I_2$ the two integrals appearing on the right side of \eqref{here}. Note that they can be estimated in the same way, by switching the the roles of $z_1$ and $z_2$. We can thus restrict our attention to $I_2$. The goal is now to show that there exists a constant $C$ such that 
\begin{equation}\label{goal}
\norm{I_2}_{L^{\infty}(\Omega)}\leq C\,\norm{f}_{L^{\infty}(\Omega)}.
\end{equation} 
For fixed $(z_1,z_2)\in D_1\times D_2$ we would like to apply Stokes' theorem in $I_2$ to remove the derivative from the function $f_2$. In order to do so, we need to avoid the point of singularity at $z_1$. We thus temporarily delete from $D_1$ a small ball $B_{\epsilon}(z_1)\subset\C$ of radius $\epsilon>0$ centered at $z_1$. We write 
\begin{equation*}
I_2=\int_{D_2}\Bigg{\{}\int_{D_1\setminus B_{\epsilon}(z_1)}\partial_{\bar{\zeta}_1}f_2\,\frac{(\overline{\zeta_1-z_1})}{(\zeta_2-z_2)|\zeta-z|^2} \,d\bar{\zeta}_1\wedge d\zeta_1 +\int_{ B_{\epsilon}(z_1)}\partial_{\bar{\zeta}_1}f_2\,\frac{(\overline{\zeta_1-z_1})}{(\zeta_2-z_2)|\zeta-z|^2} \,d\bar{\zeta}_1\wedge d\zeta_1\Bigg{\}}\, d\bar{\zeta}_2\wedge d\zeta_2.
\end{equation*}
Stokes' theorem then yields
\begin{equation}\label{imp}
\begin{split}
I_2&=\int_{D_2}\Bigg{\{}\int_{\partial D_1}f_2\,\frac{(\overline{\zeta_1-z_1})}{(\zeta_2-z_2)|\zeta-z|^2} \, d\zeta_1 - \int_{D_1\setminus B_{\epsilon}(z_1)}f_2\,\frac{\partial}{\partial{\bar{\zeta}_1}}\bigg{[}\frac{(\overline{\zeta_1-z_1})}{(\zeta_2-z_2)|\zeta-z|^2}\bigg{]} \,d\bar{\zeta}_1\wedge d\zeta_1\\&+\int_{ \partial B_{\epsilon}(z_1)} f_2\,\frac{(\overline{\zeta_1-z_1})}{(\zeta_2-z_2)|\zeta-z|^2} \,d\zeta_1 + \int_{ B_{\epsilon}(z_1)} \partial_{\bar{\zeta}_1}f_2\,\frac{(\overline{\zeta_1-z_1})}{(\zeta_2-z_2)|\zeta-z|^2} \,d\bar{\zeta}_1\wedge d\zeta_1\Bigg{\}} \,d\bar{\zeta}_2\wedge d\zeta_2.
\end{split}
\end{equation}
We now exploit the following simple observation: if $b_1,b_2$ are non-negative real numbers, $k_1, k_2$ are non-negative integers and $k=k_1+k_2$, then
\begin{equation}\label{st}
(b_1+b_2)^{k}\geq b_1^{k_1}b_2^{k_2}.
\end{equation}
We apply \eqref{st} with $b_1=|\zeta_1-z_1|$ and $b_2=|\zeta_2-z_2|$. Thus
\begin{equation}\label{tobeap}
|\zeta-z|^{2}=|\zeta_1-z_1|^2+|\zeta_2-z_2|^2\geq |\zeta_1-z_1|^{\frac{2k_1}{k}}|\zeta_2-z_2|^{\frac{2k_2}{k}}.
\end{equation}
Choosing $k_1=2, k_2=1$ in \eqref{tobeap}, we obtain
\begin{equation}\label{toep}
|\zeta-z|^2\geq|\zeta_1-z_1|^{\frac{4}{3}}|\zeta_2-z_2|^{\frac{2}{3}}.
\end{equation}
Hence
\begin{equation}\label{b}
\bigg{|}\frac{(\overline{\zeta_1-z_1})}{(\zeta_2-z_2)|\zeta-z|^2}\bigg{|}\leq \frac{1}{|\zeta_1-z_1|^{\frac{1}{3}}|\zeta_2-z_2|^{\frac{5}{3}}}.
\end{equation}
Applying Lemma \ref{lemma1} twice, we see that there exists a constant $C$ such that
\begin{equation}\label{a}
\int_{D_1\times D_2}\frac{|d\bar{\zeta}_1\wedge d\zeta_1\wedge d\bar{\zeta}_2\wedge d\zeta_2|}{|\zeta_1-z_1|^{\frac{1}{3}}|\zeta_2-z_2|^{\frac{5}{3}}}=\int_{D_1}\frac{|d\bar{\zeta}_1 \wedge d\zeta_1|}{|\zeta_1-z_1|^{\frac{1}{3}}}\int_{D_2}\frac{|d\bar{\zeta}_2\wedge d\zeta_2|}{|\zeta_2-z_2|^{\frac{5}{3}}}\leq C.
\end{equation}
Let $\chi_{\epsilon}(\zeta_1)$ be the characteristic function of the set $B_{\epsilon}(z_1)$. Hence
\begin{equation*}\label{qui}
\int_{ B_{\epsilon}(z_1)\times D_2} \partial_{\bar{\zeta}_1}f_2\,\frac{(\overline{\zeta_1-z_1})}{(\zeta_2-z_2)|\zeta-z|^2} \,d\bar{\zeta}_1\wedge d\zeta_1\wedge d\bar{\zeta}_2\wedge d\zeta_2=\int_{D_1\times D_2} \partial_{\bar{\zeta}_1}f_2\,\frac{\chi_{\epsilon}(\zeta_1)(\overline{\zeta_1-z_1})}{(\zeta_2-z_2)|\zeta-z|^2} \,d\bar{\zeta}_1\wedge d\zeta_1\wedge d\bar{\zeta}_2\wedge d\zeta_2.
\end{equation*}
Recall that $f_2\in C^{1,\alpha}(\Omega)$. By \eqref{b} and \eqref{a}, we can apply the dominated convergence theorem to conclude that 
\begin{equation}\label{7}
\lim_{\epsilon\to 0}\,\int_{ B_{\epsilon}(z_1)\times D_2} \partial_{\bar{\zeta}_1}f_2\,\frac{(\overline{\zeta_1-z_1})}{(\zeta_2-z_2)|\zeta-z|^2} \, d\zeta_1\wedge d\bar{\zeta}_2\wedge d\zeta_2=0.
\end{equation}
Note that \eqref{b} implies
\begin{equation*}
\bigg{|}\int_{ \partial B_{\epsilon}(z_1)\times D_2} \frac{(\overline{\zeta_1-z_1})}{(\zeta_2-z_2)|\zeta-z|^2} \, d\zeta_1\wedge d\bar{\zeta}_2\wedge d\zeta_2\bigg{|}\leq \int_{ \partial B_{\epsilon}(z_1)\times D_2}\frac{|d\zeta_1\wedge d\bar{\zeta}_2\wedge d\zeta_2|}{|\zeta_1-z_1|^{\frac{1}{3}}|\zeta_2-z_2|^{\frac{5}{3}}}=O(\epsilon^{\frac{5}{3}}).
\end{equation*}
Hence
\begin{equation}\label{8}
\lim_{\epsilon\to 0}\, \int_{ \partial B_{\epsilon}(z_1)\times D_2} f_2\,\frac{(\overline{\zeta_1-z_1})}{(\zeta_2-z_2)|\zeta-z|^2} \, d\zeta_1\wedge d\bar{\zeta}_2\wedge d\zeta_2=0.
\end{equation}
By \eqref{7} and \eqref{8}, letting $\epsilon\to 0$ in \eqref{imp} yields
\begin{equation}\label{quinta}
\begin{split}
I_2&=\int_{\partial D_1\times D_2}f_2\,\frac{(\overline{\zeta_1-z_1})}{(\zeta_2-z_2)|\zeta-z|^2} \, d\zeta_1\wedge d\bar{\zeta}_2\wedge d\zeta_2\\&- \int_{D_1\times D_2}f_2\,\frac{\partial}{\partial{\bar{\zeta}_1}}\bigg{[}\frac{(\overline{\zeta_1-z_1})}{(\zeta_2-z_2)|\zeta-z|^2}\bigg{]} \,d\bar{\zeta}_1\wedge d\zeta_1\wedge d\bar{\zeta}_2\wedge d\zeta_2.
\end{split}
\end{equation}
We now estimate the two integrals on the right side of \eqref{quinta}. First note, by \eqref{b}, Lemma \ref{lemma1} and Lemma \ref{lemma2}, that there exists a constant $C$ such that
\begin{equation}\label{seconda}
\bigg{|}\int_{\partial D_1\times D_2}\frac{(\overline{\zeta_1-z_1})}{(\zeta_2-z_2)|\zeta-z|^2} \, d\zeta_1\wedge d\bar{\zeta}_2\wedge d\zeta_2 \bigg{|}\leq C
\end{equation}
for every $(z_1,z_2)\in D_1\times D_2$.
Consider now the inequality \eqref{tobeap} with $k_1=1, k_2=2$. We get
\begin{equation*}
|\zeta-z|^2\geq|\zeta_1-z_1|^{\frac{2}{3}}|\zeta_2-z_2|^{\frac{4}{3}},
\end{equation*} 
which in turn yields the inequality
\begin{equation}\label{bbb}
\bigg{|}\frac{\partial}{\partial{\bar{\zeta}_1}}\bigg{[}\frac{(\overline{\zeta_1-z_1})}{(\zeta_2-z_2)|\zeta-z|^2}\bigg{]} \bigg{|}=\bigg{|}\frac{(\overline{\zeta_2-z_2})}{|\zeta-z|^4}\bigg{|}\leq\frac{1}{|\zeta_1-z_1|^{\frac{4}{3}}|\zeta_2-z_2|^{\frac{5}{3}}}.
\end{equation}
By \eqref{bbb} and Lemma \ref{lemma1}, there exists a constant $C$ such that
\begin{equation}\label{terza}
\bigg{|}\int_{D_1\times D_2}\frac{\partial}{\partial{\bar{\zeta}_1}}\bigg{[}\frac{(\overline{\zeta_1-z_1})}{(\zeta_2-z_2)|\zeta-z|^2}\bigg{]}\,d\bar{\zeta}_1\wedge d\zeta_1\wedge d\bar{\zeta}_2\wedge d\zeta_2\bigg{|}\leq C
\end{equation}
for every choice of $(z_1,z_2)\in D_1\times D_2$. Combining \eqref{quinta}, \eqref{seconda}, and \eqref{terza}, we conclude that
\begin{equation}\label{inh}
\norm{I_2}_{L^{\infty}(\Omega)}\leq C\,\norm{f}_{L^{\infty}(\Omega)}
\end{equation}
for some constant $C$. We have thus proved \eqref{goal}, as wanted.
\end{proof}

\section{Product domains in $\C^3$}\label{sectionC3}
Let $D_1,D_2,D_3\subset\C$ be open bounded domains with $C^{1,\alpha}$ boundary, where $0<\alpha<1$. Consider the product domain $\Omega=D_1\times D_2\times D_3$. For a $(0,1)$ form $f=f_1d\bar{z}_1+f_2d\bar{z}_2+f_3d\bar{z}_3$ on $\Omega$ with components $f_1,f_2,f_3\in C^{2,\alpha}(\Omega)$, we define
\begin{equation}\label{Tf}
{\bf T}f:=T^1f_1+T^2f_2+T^3f_3-T^1T^2(\partial_{\bar{z}_1}f_2)-T^1T^3(\partial_{\bar{z}_1}f_3)-T^2T^3(\partial_{\bar{z}_2}f_3)+T^1T^2T^3(\partial^2_{\bar{z}_1\bar{z}_2}f_3).
\end{equation}
Note that the regularity of the functions $f_j$ guarantees that ${\bf T}f$ is well defined.
\begin{lemma}\label{McNealC3}
${\bf T}f\in C^{1,\alpha}(\Omega)$. Moreover, if the form $f$ is $\bar\partial$-closed, then $\bar\partial {\bf T}f=f$ in $\Omega$.
\end{lemma}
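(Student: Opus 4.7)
The lemma has two parts, regularity and the verification of the $\bar\partial$-equation, and each one is an extension of the corresponding argument in the $\C^2$ case.

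For the regularity claim, my plan is to mimic Lemma \ref{P} term by term. Each single-slice term $T^k f_k$ lies in $C^{1,\alpha}(\Omega)$ by Proposition \ref{old} applied in the variable $z_k$, with the remaining variables serving as $C^{2,\alpha}$ parameters. For each of the three double-slice terms $T^1T^2(\partial_{\bar z_1} f_2)$, $T^1T^3(\partial_{\bar z_1} f_3)$, $T^2T^3(\partial_{\bar z_2} f_3)$, and for the triple term $T^1T^2T^3(\partial^2_{\bar z_1\bar z_2} f_3)$, I would compute all six first-order partial derivatives $\partial_{z_j}, \partial_{\bar z_j}$ for $j=1,2,3$, and verify that each lies in $C^\alpha(\Omega)$. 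The needed ingredients are the identity \eqref{funda}, the boundedness of the companion operator $\Pi^j$ supplied by Proposition \ref{old}, the commutativity $T^jT^k=T^kT^j$ from Fubini's theorem, and the uniform bound of Lemma \ref{lemma1}. The triple composition is the tightest case: its integrand $\partial^2_{\bar z_1 \bar z_2} f_3$ is only of class $C^\alpha(\Omega)$, but this is exactly what Proposition \ref{old} with $m=0$ demands.

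For the identity $\bar\partial {\bf T}f = f$, I would compute $\partial_{\bar z_k} {\bf T}f$ separately for $k=1,2,3$, using three tools: the fundamental identity $\partial_{\bar z_k} T^k g = g$ from \eqref{funda}; the commutation $\partial_{\bar z_k} T^j = T^j \partial_{\bar z_k}$ for $j\neq k$, valid because $T^j$ integrates against $\zeta_j$; and the $\bar\partial$-closedness relations $\partial_{\bar z_i} f_j = \partial_{\bar z_j} f_i$. For $\partial_{\bar z_1} {\bf T}f$ the bookkeeping is straightforward: the double terms $T^1T^2(\partial_{\bar z_1} f_2)$ and $T^1T^3(\partial_{\bar z_1} f_3)$ absorb the contributions of $T^2 f_2$ and $T^3 f_3$, while the triple term absorbs the contribution of $T^2T^3(\partial_{\bar z_2} f_3)$, leaving only $f_1$. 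For $\partial_{\bar z_2} {\bf T}f$ one additionally applies closedness in the form $\partial_{\bar z_1} f_2 = \partial_{\bar z_2} f_1$. For $\partial_{\bar z_3} {\bf T}f$ the iterated chain $\partial_{\bar z_3}\partial_{\bar z_1} f_2 = \partial_{\bar z_1}\partial_{\bar z_3} f_2 = \partial_{\bar z_1}\partial_{\bar z_2} f_3 = \partial^2_{\bar z_1 \bar z_2} f_3$ is needed so that the $T^1T^2$-type contribution matches the differentiated triple term.

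The main obstacle is not any single computation but the combinatorial accounting: the formula \eqref{Tf} assigns one term to each nonempty subset of $\{1,2,3\}$, and for each $\partial_{\bar z_k}$ one must check that the inclusion-exclusion-style cancellations occur simultaneously, possibly after invoking closedness once or twice. The very appearance of the third-order term $T^1T^2T^3(\partial^2_{\bar z_1 \bar z_2} f_3)$ is forced by the $\partial_{\bar z_3}$ computation, where the iterated closedness relation produces an orphan $T^1T^2\partial^2_{\bar z_1 \bar z_2} f_3$ contribution that no lower-order composition can cancel. Identifying this structural pattern here is exactly what guides the inductive construction of the operator in the general $\C^n$ setting of Section \ref{sectionCn}.
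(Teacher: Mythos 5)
Your proposal is correct and follows essentially the same route the paper takes: the paper's own proof is one line ("proved in the same way as Lemma \ref{P} and Lemma \ref{McNeal}"), and your term-by-term regularity check via Proposition \ref{old}, Fubini commutativity, and Lemma \ref{lemma1}, together with the direct computation of $\partial_{\bar z_k}{\bf T}f$ for $k=1,2,3$ using \eqref{funda} and the $\bar\partial$-closedness relations, is exactly the intended extension of the two-dimensional arguments. The bookkeeping you describe (no closedness needed for $k=1$, one use for $k=2$, the iterated chain for $k=3$) is accurate, and your observation that the triple term is forced by the $\partial_{\bar z_3}$ computation correctly anticipates the inductive structure of Section \ref{sectionCn}.
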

\begin{proof}
The statements are proved in the same way as Lemma \ref{P} and Lemma \ref{McNeal}.
\end{proof}
\begin{theorem}\label{theoremC3}
Let $D_1,D_2,D_3\subset\C$ be open bounded domains with $C^{1,\alpha}$ boundary, where $0<\alpha<1$. Consider the product domain $\Omega=D_1\times D_2\times D_3$. Let $f=f_1d\bar{z}_1+f_2d\bar{z}_2+f_3d\bar{z}_3$ be a $\bar\partial$-closed $(0,1)$ form on $\Omega$ with components $f_1,f_2,f_3\in C^{2,\alpha}(\Omega).$ Then ${\bf T}f\in C^{1,\alpha}(\Omega)$ and $\bar\partial {\bf T}f=f$ in $\Omega$. Moreover, ${\bf T}f$ satisfies the supnorm estimate
\begin{equation}\label{estimateC3}
\norm{{\bf T}f}_{L^{\infty}(\Omega)}\leq C\norm{f}_{L^{\infty}(\Omega)}
\end{equation}
for some constant $C$ independent of $f$.
\end{theorem}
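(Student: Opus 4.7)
The first two statements are Lemma \ref{McNealC3}, so only the supnorm estimate \eqref{estimateC3} requires work. The plan is to estimate each of the seven summands in the definition \eqref{Tf} of ${\bf T}f$ separately and to show that each is bounded by $C\norm{f}_{L^\infty(\Omega)}$.

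The three single-slice terms $T^jf_j$ are bounded directly by $C\norm{f_j}_{L^\infty(\Omega)}$ via Lemma \ref{lemma1}, exactly as in Theorem \ref{theoremC2}. Each of the three double-slice terms $T^iT^j(\partial_{\bar z_i}f_j)$ reduces to the ``hard'' part of Theorem \ref{theoremC2}: for fixed value of the remaining variable $z_l$, the expression is precisely the integral $I_1+I_2$ from the $\C^2$ proof applied on the slice $D_i\times D_j$ to the $\bar\partial$-closed $(0,1)$ form $f_i\,d\bar z_i+f_j\,d\bar z_j$; since the constant produced there depends only on the supnorm, the estimate transfers uniformly to $\Omega$.

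The main work is the triple-slice term $J(z):=T^1T^2T^3(\partial^2_{\bar z_1\bar z_2}f_3)(z)$. The strategy is to iterate the $\C^2$ argument. First, decompose the kernel $\prod_{k=1}^3(\zeta_k-z_k)^{-1}$ into a finite sum of pieces of the form
$$\frac{\overline{\zeta_i-z_i}\,\overline{\zeta_j-z_j}}{(\zeta_k-z_k)\,|\zeta-z|^{2}\,\big(|\zeta_j-z_j|^2+|\zeta_k-z_k|^2\big)},$$
obtained by applying the three-dimensional analog of \eqref{decompositionC2} once to peel off one conjugate factor, and then the two-dimensional identity a second time on the remaining pair. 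For each summand, use the $\bar\partial$-closedness of $f$ to rewrite $\partial^2_{\bar\zeta_1\bar\zeta_2}f_3$ as $\partial^2_{\bar\zeta_i\bar\zeta_j}f_k$, and then apply Stokes' theorem successively in $\zeta_i$ and in $\zeta_j$ to move both derivatives off $f_k$ and onto the kernel. The pointwise singularities at $\zeta_i=z_i$ and $\zeta_j=z_j$ are excised by small balls and handled by dominated convergence exactly as in \eqref{imp}--\eqref{quinta}. What remains is a finite sum of bulk integrals over $\Omega$ and boundary integrals over faces of the form $\partial D_i\times D_j\times D_k$ and $\partial D_i\times\partial D_j\times D_k$, in each of which $f_k$ appears without any derivative against an explicit kernel.

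The residual integrals will be bounded uniformly in $z$ by the same elementary tools as in the $\C^2$ case: the inequality \eqref{st} splits each power of $|\zeta-z|$ into a product $\prod_l|\zeta_l-z_l|^{-2k_l}$ with exponents chosen strictly less than $2$ on each disc factor integrated in the bulk and strictly less than $1$ on each boundary factor, and then Lemmas \ref{lemma1} and \ref{lemma2} produce the uniform constant. The step I expect to be the main obstacle is the combinatorial bookkeeping in this Stokes' phase: each of the pieces in the kernel decomposition produces its own collection of bulk and boundary terms after two integrations by parts, and for each one must verify that the exponents $k_l$ can always be allocated so that every single-variable integral, on a disc or on a boundary, stays in the integrable range. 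This is the double analog of the single balance that made the $\C^2$ proof work, and it is the point where the argument genuinely uses the three-dimensional product structure rather than merely iterating the $\C^2$ case.
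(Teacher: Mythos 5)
Your overall strategy---reduce to the triple-slice term, decompose the Cauchy kernel, apply Stokes' theorem twice to move both $\bar\partial$-derivatives off $f_k$ onto the kernel, excise small balls and use dominated convergence, then split the residual kernels via \eqref{st} and apply Lemma~\ref{lemma1} and Lemma~\ref{lemma2}---is exactly the paper's. The single-slice and double-slice reductions are also handled the same way, and your observation that for each fixed third variable the slice form $f_i\,d\bar z_i+f_j\,d\bar z_j$ is $\bar\partial$-closed on $D_i\times D_j$ is the correct justification for transferring Theorem~\ref{theoremC2}.

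Where you genuinely diverge is the kernel decomposition, and it is worth noting. The paper uses the single symmetric identity~\eqref{decompositionC3} with $G=|\zeta_2-z_2|^2|\zeta_3-z_3|^2+|\zeta_1-z_1|^2|\zeta_3-z_3|^2+|\zeta_1-z_1|^2|\zeta_2-z_2|^2$, yielding three pieces $\frac{\overline{\zeta_i-z_i}\,\overline{\zeta_j-z_j}}{(\zeta_k-z_k)\,G}$. You instead iterate: first write $\frac{1}{a_1a_2a_3}=\sum_k\frac{\bar a_k}{(\prod_{l\neq k}a_l)\,|a|^2}$ (a valid identity, since $\sum_k\bar a_k a_k=|a|^2$), and then apply \eqref{decompositionC2} once more to each remaining pair, producing six pieces with denominators $a_k\,|a|^2\,(|a_j|^2+|a_k|^2)$. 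Both kernels have homogeneity $-3$, so the same degree-counting in Lemma~\ref{lemma1}/Lemma~\ref{lemma2} applies, and I believe your exponent balance goes through: for instance, the bulk term $\frac{1}{|a_k|\,|a|^2\,(|a_j|^2+|a_k|^2)}$ can be majorized via \eqref{st} by $|a_i|^{-2\alpha_i}|a_j|^{-2\alpha_j-2\gamma_j}|a_k|^{-1-2\alpha_k-2\gamma_k}$ with $\sum\alpha_l=1$, $\gamma_j+\gamma_k=1$, and one can keep all three exponents strictly below $2$ (e.g.\ $\alpha_i=7/8$, $\alpha_j=1/8$, $\alpha_k=0$, $\gamma_j=3/4$, $\gamma_k=1/4$). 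What the paper's symmetric $G$ buys is algebraic cleanliness: differentiating the kernel produces a single telescoped term, as in~\eqref{ggg}, $\partial_{\bar\zeta_1}\big[\frac{\bar a_1\bar a_2}{a_3 G}\big]=\frac{\bar a_2|a_2|^2\bar a_3}{G^2}$, and only three pieces need to be tracked. Your denominator $|a|^2\,(|a_j|^2+|a_k|^2)$ factors into two separate quantities both of which get hit by $\partial_{\bar\zeta_j}$, so differentiation produces more cross-terms, and you have six pieces rather than three; the bookkeeping you identify as the main risk is thus heavier but not, as far as I can see, obstructed. Also note that the paper's Lemma~\ref{decompositioncomplexnumbers} generalizes~\eqref{decompositionC3} to $n$ factors and underlies the induction in Theorem~\ref{main}; your iterated decomposition would need to be similarly reorganized before it could drive the general-$n$ argument.
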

\begin{proof} 
We only need to prove the estimate \eqref{estimateC3}. As we have already observed in the proof of Theorem \ref{theoremC2}, Lemma \ref{lemma1} implies that there exists a constant $C$ such that $\norm{T^jf_j}_{L^{\infty}(\Omega)}\leq C\norm{f_j}_{L^{\infty}(\Omega)}$ for $j=1,2,3$. Additionally, the terms of the form 
$T^iT^j(\partial_{\bar{z}_i}f_j)$ can be estimated in the same way as in the 2-dimensional case considered in Theorem \ref{theoremC2}. Hence, letting $Df:=\partial^2_{\bar{z}_1\bar{z}_2}f_3=\partial^2_{\bar{z}_1\bar{z}_3}f_2=\partial^2_{\bar{z}_2\bar{z}_3}f_1$, it is enough to estimate the term 
\begin{equation}\label{T3}
T^1T^2T^3(Df)=-\frac{1}{(2\pi i)^3}\int_{D_1\times D_2\times D_3}\frac{Df(\zeta_1,\zeta_2,\zeta_3)}{(\zeta_1-z_1)(\zeta_2-z_2)(\zeta_3-z_3)}\, d\bar{\zeta}_1\wedge\dots\wedge d\zeta_3.
\end{equation}
Define $G:=|\zeta_2-z_2|^2|\zeta_3-z_3|^2+|\zeta_1-z_1|^2|\zeta_3-z_3|^2+|\zeta_1-z_1|^2|\zeta_2-z_2|^2$. We exploit the following higher-dimensional analog of identity \eqref{decompositionC2}:
\begin{equation}\label{decompositionC3}
\frac{1}{(\zeta_1-z_1)(\zeta_2-z_2)(\zeta_3-z_3)}=\frac{(\overline{\zeta_2-z_2})(\overline{\zeta_3-z_3})}{(\zeta_1-z_1)\,G}+\frac{(\overline{\zeta_1-z_1})(\overline{\zeta_3-z_3})}{(\zeta_2-z_2)\,G}+\frac{(\overline{\zeta_1-z_1})(\overline{\zeta_2-z_2})}{(\zeta_3-z_3)\,G}.
\end{equation}
Substituting \eqref{decompositionC3} into \eqref{T3} and recalling the three different ways of expressing $Df$, we obtain
\begin{equation} \label{threeintegrals}
\begin{split}
-(2\pi i)^3T^1T^2T^3(Df)&=\int_{D_1\times D_2\times D_3}\frac{\partial^2 f_1}{\partial\bar{\zeta}_2\partial\bar{\zeta}_3}\,\frac{(\overline{\zeta_2-z_2})(\overline{\zeta_3-z_3})}{(\zeta_1-z_1)\,G}\, d\bar{\zeta}_1\wedge\dots\wedge d\zeta_3\\&+ \int_{D_1\times D_2\times D_3}\frac{\partial^2 f_2}{\partial\bar{\zeta}_1\partial\bar{\zeta}_3}\,\frac{(\overline{\zeta_1-z_1})(\overline{\zeta_3-z_3})}{(\zeta_2-z_2)\,G}\, d\bar{\zeta}_1\wedge\dots\wedge d\zeta_3\\&+\int_{D_1\times D_2\times D_3}\frac{\partial^2 f_3}{\partial\bar{\zeta}_1\partial\bar{\zeta}_2}\,\frac{(\overline{\zeta_1-z_1})(\overline{\zeta_2-z_2})}{(\zeta_3-z_3)\,G}\, d\bar{\zeta}_1\wedge\dots\wedge d\zeta_3\\&=I_1+I_2+I_3.
\end{split}
\end{equation}
We call $I_1,I_2$ and $I_3$ the integrals appearing on the right side of \eqref{threeintegrals}. Note that they can be estimated in the same way by renaming the variables. Without loss of generality we thus restrict our attention to $I_3$. The goal is now to prove that there exists a constant $C$ such that 
\begin{equation}\label{goaaaal}
\norm{I_3}_{L^{\infty}(\Omega)}\leq C\norm{f}_{L^{\infty}(\Omega)}.
\end{equation}
Fix $(z_1,z_2,z_3)\in D_1\times D_2\times D_3$. As in the 2-dimensional case, in order to apply Stokes' theorem, we remove from $D_1$ a small ball $B_{\epsilon}(z_1)$ of radius $\epsilon>0$ centered at $z_1$. We obtain
\begin{equation}\label{again}
\begin{split}
I_3&=\int_{D_2\times D_3}\Bigg{\{}\int_{\partial D_1}\frac{\partial f_3}{\partial\bar{\zeta}_2}\,\frac{(\overline{\zeta_1-z_1})(\overline{\zeta_2-z_2})}{(\zeta_3-z_3)\,G}\,d\zeta_1-\int_{D_1\setminus B_{\epsilon}(z_1)}\frac{\partial f_3}{\partial\bar{\zeta}_2}\frac{\partial}{\partial\bar{\zeta}_1}\bigg{[}\frac{(\overline{\zeta_1-z_1})(\overline{\zeta_2-z_2})}{(\zeta_3-z_3)\,G}\bigg{]}d\bar{\zeta}_1\wedge d\zeta_1\\&+\int_{\partial B_{\epsilon}(z_1)}\frac{\partial f_3}{\partial\bar{\zeta}_2}\,\frac{(\overline{\zeta_1-z_1})(\overline{\zeta_2-z_2})}{(\zeta_3-z_3)\,G}\,d\zeta_1+\int_{B_{\epsilon}(z_1)}\frac{\partial^2 f_3}{\partial\bar{\zeta}_1\partial\bar{\zeta}_2}\,\frac{(\overline{\zeta_1-z_1})(\overline{\zeta_2-z_2})}{(\zeta_3-z_3)\,G}\,d\bar{\zeta}_1\wedge d\zeta_1\Bigg{\}} d\bar{\zeta}_2 \wedge\dots \wedge d\zeta_3.\\
\end{split}
\end{equation}
We now prove that the last two integrals in \eqref{again} disappear if we let $\epsilon\to 0$. First note that if $b_1,b_2, b_3$ are non-negative real numbers, $k_1, k_2, k_3$ are non-negative integers and $k=k_1+k_2+k_3$, then
\begin{equation}\label{st3}
(b_1+b_2+b_3)^{k}\geq b_1^{k_1}b_2^{k_2}b_3^{k_3}.
\end{equation}
We apply \eqref{st3} with $b_1=|\zeta_2-z_2|^2|\zeta_3-z_3|^2$, $b_2=|\zeta_1-z_1|^2|\zeta_3-z_3|^2$ and $b_3=|\zeta_1-z_1|^2|\zeta_2-z_2|^2$. Then
\begin{equation}\label{theG3}
G=b_1+b_2+b_3\geq b_1^{\frac{k_1}{k}}b_2^{\frac{k_2}{k}}b_3^{\frac{k_3}{k}}=|\zeta_1-z_1|^{2\frac{k_2+k_3}{k}}|\zeta_2-z_2|^{2\frac{k_1+k_3}{k}}|\zeta_3-z_3|^{2\frac{k_1+k_2}{k}}.
\end{equation}
Letting $k_1=k_2=1, k_3=6$ in \eqref{theG3}, we obtain
\begin{equation}\label{g}
\bigg{|}\frac{(\overline{\zeta_1-z_1})(\overline{\zeta_2-z_2})}{(\zeta_3-z_3)\,G}\bigg{|}\leq\frac{1}{|\zeta_1-z_1|^{\frac{3}{4}}|\zeta_2-z_2|^{\frac{3}{4}}|\zeta_3-z_3|^{\frac{3}{2}}}.
\end{equation}
By Lemma \ref{lemma1}, there exists a constant $C$ such that 
\begin{equation}\label{h}
\int_{D_1\times D_2\times D_3} \frac{|d\bar{\zeta}_1\wedge\dots\wedge d\zeta_3|}{|\zeta_1-z_1|^{\frac{3}{4}}|\zeta_2-z_2|^{\frac{3}{4}}|\zeta_3-z_3|^{\frac{3}{2}}} \leq C.
\end{equation}
Since $f_3\in C^{2,\alpha}(\Omega),$ we can apply the dominated convergence theorem to conclude that
\begin{equation}\label{citt1}
\lim_{\epsilon\to 0}\,\int_{B_{\epsilon}(z_1)\times D_2\times D_3}\frac{\partial^2 f_3}{\partial\bar{\zeta}_1\partial\bar{\zeta}_2}\,\frac{(\overline{\zeta_1-z_1})(\overline{\zeta_2-z_2})}{(\zeta_3-z_3)\,G}\, d\bar{\zeta}_1\wedge\dots\wedge d\zeta_3= 0.
\end{equation}
Note that \eqref{g} implies
\begin{equation*}
\bigg{|}\int_{\partial B_{\epsilon}(z_1)\times D_2\times D_3}\frac{(\overline{\zeta_1-z_1})(\overline{\zeta_2-z_2})}{(\zeta_3-z_3)\,G}\,d\zeta_1\wedge\dots\wedge d\zeta_3\bigg{|}\leq \int_{\partial B_{\epsilon}(z_1)\times D_2\times D_3} \frac{|d\zeta_1\wedge\dots\wedge d\zeta_3|}{|\zeta_1-z_1|^{\frac{3}{4}}|\zeta_2-z_2|^{\frac{3}{4}}|\zeta_3-z_3|^{\frac{3}{2}}}=O(\epsilon^{\frac{5}{4}}).
\end{equation*}
Hence
\begin{equation}\label{citt2}
\lim_{\epsilon\to 0}\,\int_{\partial B_{\epsilon}(z_1)\times D_2\times D_3}\frac{\partial f_3}{\partial\bar{\zeta}_2}\,\frac{(\overline{\zeta_1-z_1})(\overline{\zeta_2-z_2})}{(\zeta_3-z_3)\,G}\,d\zeta_1\wedge d\bar{\zeta}_2\wedge\dots\wedge d\zeta_3=0
\end{equation}
Equations \eqref{citt1} and \eqref{citt2} show that letting $\epsilon\to 0$ in \eqref{again} we obtain
\begin{equation}\label{againaa}
\begin{split}
I_3&=\int_{\partial D_1\times D_2\times D_3}\frac{\partial f_3}{\partial\bar{\zeta}_2}\,\frac{(\overline{\zeta_1-z_1})(\overline{\zeta_2-z_2})}{(\zeta_3-z_3)\,G}\,d\zeta_1\wedge\dots\wedge d\zeta_3\\&-\int_{D_1\times D_2\times D_3}\frac{\partial f_3}{\partial\bar{\zeta}_2}\frac{\partial}{\partial\bar{\zeta}_1}\bigg{[}\frac{(\overline{\zeta_1-z_1})(\overline{\zeta_2-z_2})}{(\zeta_3-z_3)\,G}\bigg{]}d\bar{\zeta}_1\wedge\dots\wedge d\zeta_3\\&=I_4-I_5.
\end{split}
\end{equation}
We call $I_4$ and $I_5$ the two integrals appearing on the right side of \eqref{againaa}. For each of them we
follow the same steps as above. That is, we remove a small ball $B_{\epsilon}(z_2)$ from $D_2$ and we apply Stokes' theorem. For $I_4$, we get
\begin{equation}\label{again2}
\begin{split}
I_4&=\int_{D_3}\Bigg{\{}\int_{\partial D_1\times \partial D_2} f_3\,\frac{(\overline{\zeta_1-z_1})(\overline{\zeta_2-z_2})}{(\zeta_3-z_3)\,G}\,d\zeta_1\wedge d\zeta_2 - \int_{\partial D_1\times D_2\setminus B_{\epsilon}(z_2)} f_3\,\frac{\partial}{\partial\bar{\zeta}_2}\bigg{[}\frac{(\overline{\zeta_1-z_1})(\overline{\zeta_2-z_2})}{(\zeta_3-z_3)\,G}\bigg{]}\,d\zeta_1\dots d\zeta_2\\&
+\int_{\partial D_1\times \partial B_{\epsilon}(z_2)} f_3\,\frac{(\overline{\zeta_1-z_1})(\overline{\zeta_2-z_2})}{(\zeta_3-z_3)\,G}\,d\zeta_1\wedge d\zeta_2+\int_{\partial D_1\times B_{\epsilon}(z_2)}\frac{\partial f_3}{\partial\bar{\zeta}_2}\,\frac{(\overline{\zeta_1-z_1})(\overline{\zeta_2-z_2})}{(\zeta_3-z_3)\,G}\,d\zeta_1\dots d\zeta_2\Bigg{\}}\, d\bar{\zeta}_3\wedge d\zeta_3.\\
\end{split}
\end{equation}
We argue in the same way as for \eqref{again}. Exploiting \eqref{g}, Lemma \ref{lemma1}, Lemma \ref{lemma2} and the dominated convergence theorem, we see that taking $\lim_{\epsilon\to 0}$ in \eqref{again2} gives
\begin{equation}\label{again2aa}
\begin{split}
I_4&=\int_{\partial D_1\times \partial D_2\times D_3} f_3\,\frac{(\overline{\zeta_1-z_1})(\overline{\zeta_2-z_2})}{(\zeta_3-z_3)\,G}\,d\zeta_1\wedge\dots\wedge d\zeta_3 \\&- \int_{\partial D_1\times D_2\times D_3} f_3\,\frac{\partial}{\partial\bar{\zeta}_2}\bigg{[}\frac{(\overline{\zeta_1-z_1})(\overline{\zeta_2-z_2})}{(\zeta_3-z_3)\,G}\bigg{]}\,d\zeta_1\wedge\dots\wedge d\zeta_3.\\
\end{split}
\end{equation} 
 For $I_5$, Stokes' theorem yields
 \begin{equation}\label{again3}
\begin{split}
I_5&=\int_{D_3}\Bigg{\{}\int_{ D_1\times \partial D_2} f_3\,\frac{\partial}{\partial\bar{\zeta}_1}\bigg{[}\frac{(\overline{\zeta_1-z_1})(\overline{\zeta_2-z_2})}{(\zeta_3-z_3)\,G}\bigg{]}\,d\bar{\zeta}_1\wedge d\zeta_{1}\wedge d\zeta_2 \\&- \int_{ D_1\times D_2\setminus B_{\epsilon}(z_2)} f_3\,\frac{\partial^2}{\partial\bar{\zeta}_1\partial\bar{\zeta}_2}\bigg{[}\frac{(\overline{\zeta_1-z_1})(\overline{\zeta_2-z_2})}{(\zeta_3-z_3)\,G}\bigg{]}\,d\bar{\zeta_1}\wedge d\zeta_1\wedge d\bar{\zeta}_2\wedge d\zeta_2\\&
+\int_{ D_1\times \partial B_{\epsilon}(z_2)} f_3\,\frac{\partial}{\partial\bar{\zeta}_1}\bigg{[}\frac{(\overline{\zeta_1-z_1})(\overline{\zeta_2-z_2})}{(\zeta_3-z_3)\,G}\bigg{]}\,d\bar{\zeta}_1\wedge d\zeta_1\wedge  d\zeta_2\\&+\int_{ D_1\times B_{\epsilon}(z_2)}\frac{\partial f_3}{\partial\bar{\zeta}_2}\,\frac{\partial}{\partial\bar{\zeta}_1}\bigg{[}\frac{(\overline{\zeta_1-z_1})(\overline{\zeta_2-z_2})}{(\zeta_3-z_3)\,G}\bigg{]}\,\,d\bar{\zeta_1}\wedge d\zeta_1\wedge d\bar{\zeta}_2\wedge d\zeta_2\Bigg{\}}\, d\bar{\zeta}_3\wedge d\zeta_3.\\
\end{split}
\end{equation}
Note that 
\begin{equation}\label{ggg}
\frac{\partial}{\partial\bar{\zeta}_1}\bigg{[}\frac{(\overline{\zeta_1-z_1})(\overline{\zeta_2-z_2})}{(\zeta_3-z_3)\,G}\bigg{]}=\frac{(\overline{\zeta_2-z_2})|\zeta_2-z_2|^2(\overline{\zeta_3-z_3})}{G^2}.
\end{equation}
From \eqref{theG3} with $k_1=9, k_2=1, k_3=6$, we get 
\begin{equation}\label{hhh}
\bigg{|}\frac{(\overline{\zeta_2-z_2})|\zeta_2-z_2|^2(\overline{\zeta_3-z_3})}{G^2}\bigg{|}\leq \frac{|\zeta_2-z_2|^3|\zeta_3-z_3|}{|\zeta_1-z_1|^{\frac{7}{4}}|\zeta_2-z_2|^{\frac{15}{4}}|\zeta_3-z_3|^{\frac{5}{2}}}=\frac{1}{|\zeta_1-z_1|^{\frac{7}{4}}|\zeta_2-z_2|^{\frac{3}{4}}|\zeta_3-z_3|^{\frac{3}{2}}}.
\end{equation}
Applying Lemma \ref{lemma1} three times, we see that there exists a constant $C$ such that 
\begin{equation}\label{hfinta}
\int_{D_1\times D_2\times D_3} \frac{|d\bar{\zeta}_1\wedge\dots\wedge d\zeta_3|}{|\zeta_1-z_1|^{\frac{7}{4}}|\zeta_1-z_1|^{\frac{3}{4}}|\zeta_1-z_1|^{\frac{3}{2}}} \leq C.
\end{equation}
We can thus apply the dominated convergence theorem to conclude that
\begin{equation*} 
\lim_{\epsilon\to 0}\,\int_{ D_1\times B_{\epsilon}(z_2)\times D_3}\frac{\partial f_3}{\partial\bar{\zeta}_2}\,\frac{\partial}{\partial\bar{\zeta}_1}\bigg{[}\frac{(\overline{\zeta_1-z_1})(\overline{\zeta_2-z_2})}{(\zeta_3-z_3)\,G}\bigg{]}\,\,d\bar{\zeta_1}\wedge d\zeta_1\wedge d\bar{\zeta}_2\wedge d\zeta_2\wedge d\bar{\zeta}_3\wedge d\zeta_3=0.
\end{equation*}
Moreover, equations \eqref{ggg} and \eqref{hhh} imply
\begin{equation*}
\begin{split}
\bigg{|}\int_{ D_1\times \partial B_{\epsilon}(z_2)\times D_3} \frac{\partial}{\partial\bar{\zeta}_1}\bigg{[}\frac{(\overline{\zeta_1-z_1})(\overline{\zeta_2-z_2})}{(\zeta_3-z_3)\,G}\bigg{]}\,d\bar{\zeta}_1\dots d\zeta_3\bigg{|}&\leq\int_{D_1\times  \partial B_{\epsilon}(z_2)\times D_3} \frac{|d\bar{\zeta}_1\wedge\dots\wedge d\zeta_3|}{|\zeta_1-z_1|^{\frac{7}{4}}|\zeta_2-z_2|^{\frac{3}{4}}|\zeta_3-z_3|^{\frac{3}{2}}}\\&=O(\epsilon^{\frac{5}{4}}). 
\end{split}
\end{equation*}
 Letting $\epsilon\to 0$ in \eqref{again3}, we therefore obtain
\begin{equation}\label{again33}
\begin{split}
I_5&=\int_{ D_1\times \partial D_2\times D_3} f_3\,\frac{\partial}{\partial\bar{\zeta}_1}\bigg{[}\frac{(\overline{\zeta_1-z_1})(\overline{\zeta_2-z_2})}{(\zeta_3-z_3)\,G}\bigg{]}\,d\bar{\zeta}_1\wedge d\zeta_{1}\wedge d\zeta_2\wedge d\bar{\zeta}_3\wedge d\zeta_3 \\&- \int_{ D_1\times D_2\times D_3} f_3\,\frac{\partial^2}{\partial\bar{\zeta}_1\partial\bar{\zeta}_2}\bigg{[}\frac{(\overline{\zeta_1-z_1})(\overline{\zeta_2-z_2})}{(\zeta_3-z_3)\,G}\bigg{]}\,d\bar{\zeta_1}\wedge d\zeta_1\wedge d\bar{\zeta}_2\wedge d\zeta_2\wedge d\bar{\zeta}_3\wedge d\zeta_3.
\end{split}
\end{equation}
Combining \eqref{again2aa} and \eqref{again33}, and recalling that $I_3=I_4-I_5$, we can write
\begin{equation}\label{toestt}
\begin{split}
I_3&=\int_{\partial D_1\times \partial D_2\times D_3} f_3\,\frac{(\overline{\zeta_1-z_1})(\overline{\zeta_2-z_2})}{(\zeta_3-z_3)\,G}\,d\zeta_1\wedge d\zeta_2\wedge d\bar{\zeta}_3\wedge d\zeta_3\\&-\int_{\partial D_1\times D_2\times D_3} f_3\,\frac{(\overline{\zeta_1-z_1})|\zeta_1-z_1|^2(\overline{\zeta_3-z_3})}{G^2}\,d\zeta_1\wedge  d\bar{\zeta}_2\wedge d\zeta_2\wedge d\bar{\zeta}_3\wedge d\zeta_3\\&-\int_{D_1\times \partial D_2\times D_3} f_3\,\frac{(\overline{\zeta_2-z_2})|\zeta_2-z_2|^2|(\overline{\zeta_3-z_3})}{G^2}\,  d\bar{\zeta}_1\wedge d\zeta_1\wedge d\zeta_2\wedge d\bar{\zeta}_3\wedge d\zeta_3\\&+\int_{D_1\times D_2\times D_3} f_3\,\frac{(\overline{\zeta_3-z_3})|\zeta_1-z_1|^2|\zeta_2-z_2|^2|\zeta_3-z_3|^2}{G^3}\,d\bar{\zeta}_1\wedge d\zeta_1\wedge d\bar{\zeta}_2\wedge d\zeta_2\wedge d\bar{\zeta}_3\wedge d\zeta_3.
\end{split}
\end{equation}
By \eqref{g}, Lemma \ref{lemma1} and Lemma \ref{lemma2}, there exists a constant $C$ such that
\begin{equation}\label{una}
\norm{\int_{\partial D_1\times \partial D_2\times D_3}\frac{(\overline{\zeta_1-z_1})(\overline{\zeta_2-z_2})}{(\zeta_3-z_3)\,G}\,d\zeta_1\wedge d\zeta_2\wedge d\bar{\zeta}_3\wedge d\zeta_3}_{L^{\infty}(\Omega)}\leq C.
\end{equation}
Similarly, by \eqref{hhh}, Lemma \ref{lemma1} and Lemma \ref{lemma2}, there exists a constant $C$ such that
\begin{equation}\label{due}
\norm{\int_{D_1\times \partial D_2\times D_3} \frac{(\overline{\zeta_2-z_2})|\zeta_2-z_2|^2|(\overline{\zeta_3-z_3})}{G^2}\, d\bar{\zeta}_1\wedge d\zeta_1\wedge d\zeta_2\wedge d\bar{\zeta}_3\wedge d\zeta_3}_{L^{\infty}(\Omega)}\leq C.
\end{equation}
From \eqref{theG3} applied with $k_1=k_2=9, k_3=6$, we obtain 
\begin{equation*}
\bigg{|}\frac{(\overline{\zeta_3-z_3})|\zeta_1-z_1|^2|\zeta_2-z_2|^2|\zeta_3-z_3|^2}{G^3}\bigg{|}\leq \frac{|\zeta_1-z_1|^2|\zeta_2-z_2|^2|\zeta_3-z_3|^3}{|\zeta_1-z_1|^{\frac{15}{4}}|\zeta_2-z_2|^{\frac{15}{4}}|\zeta_3-z_3|^{\frac{9}{2}}}=\frac{1}{|\zeta_1-z_1|^{\frac{7}{4}}|\zeta_1-z_1|^{\frac{7}{4}}|\zeta_1-z_1|^{\frac{3}{2}}}.
\end{equation*}
Hence, by Lemma \ref{lemma1} and Lemma \ref{lemma2}, there exists a constant $C$ such that
\begin{equation}\label{tre}
\norm{\int_{D_1\times D_2\times D_3} \frac{(\overline{\zeta_3-z_3})|\zeta_1-z_1|^2|\zeta_2-z_2|^2|\zeta_3-z_3|^2}{G^3}\,d\bar{\zeta}_1\wedge d\zeta_1\wedge d\bar{\zeta}_2\wedge d\zeta_2\wedge d\bar{\zeta}_3\wedge d\zeta_3}_{L^{\infty}(\Omega)}\leq C.
\end{equation}
Now observe that the second and third integral on the right side of \eqref{toestt} can be estimated in the same way by reversing the roles of the two variables. From \eqref{una}, \eqref{due} and \eqref{tre}, we can thus conclude that there exists a constant $C$ such that
\begin{equation*}
\norm{I_3}_{L^{\infty}(\Omega)}\leq C\norm{f}_{L^{\infty}(\Omega)}.
\end{equation*}
This proves \eqref{goaaaal}.
\end{proof}

\section{The general dimension case}\label{sectionCn}
For $n\geq 2$, let $D_1,\dots,D_n\subset\C$ be open bounded domains with $C^{1,\alpha}$ boundary, where $0<\alpha<1$. Consider the product domain $\Omega=D_1\times\dots\times D_n$. For a $(0,1)$ form $f=f_1d\bar{z}_1+\dots+f_nd\bar{z}_n$ in $\Omega$ with components $f_j\in C^{n-1,\alpha}(\Omega)$, we define the integral operator ${\bf T}f$ as follows:
\begin{equation}\label{solutionformula}
{\bf T}f:=\sum_{s=1}^{n} (-1)^{s-1}\sum_{1\leq i_{1}<\dots<i_s\leq n}T^{i_1}\dots T^{i_s}\bigg{(}\frac{\partial^{s-1}f_{i_s}}{\partial \bar{z}_{i_1}\dots\partial\bar{z}_{i_{s-1}}}\bigg{)}.
\end{equation}
Note that the regularity of the $f_j$ ensures that ${\bf T}f$ is well defined. 
\begin{lemma}\label{generalsolutionlemma}
${\bf T}f\in C^{1,\alpha}(\Omega)$. Moreover, if the form $f$ is $\bar\partial$-closed, then $\bar\partial {\bf T}f=f$ in $\Omega$.
\end{lemma}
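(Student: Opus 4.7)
The plan is to follow the pattern established for $n=2$ (Lemmas \ref{P}, \ref{McNeal}) and $n=3$ (Lemma \ref{McNealC3}), with the only new difficulty being the combinatorial bookkeeping over the many terms in \eqref{solutionformula}. For brevity I will write, for a nonempty $S=\{i_1<\dots<i_s\}\subseteq\{1,\dots,n\}$, $g_S:=\partial^{s-1}f_{\max S}/\prod_{j\in S\setminus\{\max S\}}\partial\bar{z}_j$, and $T^S:=T^{i_1}\cdots T^{i_s}$, so that ${\bf T}f=\sum_{S\neq\emptyset}(-1)^{|S|-1}T^S(g_S)$.

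\textbf{Regularity.} Since $f_{\max S}\in C^{n-1,\alpha}(\Omega)$ and $s-1\leq n-1$ antiholomorphic derivatives are applied, $g_S\in C^{n-s,\alpha}(\Omega)$. To show $T^S(g_S)\in C^{1,\alpha}(\Omega)$, I would check that each first derivative in $z_k$ or $\bar{z}_k$ lies in $C^{\alpha}(\Omega)$. When $k=i_j\in S$, I would apply Proposition \ref{old} in the $k$-th slice and use that $T^a$ commutes with $\partial_{z_b},\partial_{\bar{z}_b}$ for $a\neq b$: the derivative $\partial_{\bar{z}_k}$ cancels $T^{i_j}$ (via $\partial_{\bar{z}_k}T^k=\mathrm{id}$) and $\partial_{z_k}$ replaces $T^{i_j}$ by the bounded operator $\Pi^{i_j}$, leaving in either case a composition of slice operators and one $\Pi$ applied to $g_S\in C^{n-s,\alpha}(\Omega)\subseteq C^{\alpha}(\Omega)$. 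When $k\notin S$, both derivatives commute past every $T^{i_j}$ and land on $g_S$; since in this case $s\leq n-1$, we have $\partial_{z_k}g_S,\partial_{\bar{z}_k}g_S\in C^{n-s-1,\alpha}(\Omega)\subseteq C^{\alpha}(\Omega)$. Summing over $S$ gives ${\bf T}f\in C^{1,\alpha}(\Omega)$.

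\textbf{The equation $\bar\partial{\bf T}f=f$.} Fix $k\in\{1,\dots,n\}$. Computing $\partial_{\bar{z}_k}{\bf T}f$ term by term yields two kinds of contribution:
\begin{itemize}
\item Case A ($k\in S$): $\partial_{\bar{z}_k}T^{S}(g_S)=T^{S\setminus\{k\}}(g_S)$, with sign $(-1)^{|S|-1}$;
\item Case B ($k\notin S$): $\partial_{\bar{z}_k}T^{S}(g_S)=T^{S}(\partial_{\bar{z}_k}g_S)$, with sign $(-1)^{|S|-1}$.
\end{itemize}
The core step will be the following pairing. For each nonempty $R\subseteq\{1,\dots,n\}\setminus\{k\}$, I would pair the Case B contribution of $S=R$ with the Case A contribution of $S=R\cup\{k\}$. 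If $k<\max R$, then $\max(R\cup\{k\})=\max R$ and a direct inspection of the defining formula for $g_{R\cup\{k\}}$ shows it coincides with $\partial_{\bar{z}_k}g_R$; since the signs $(-1)^{|R|-1}$ and $(-1)^{|R|}$ are opposite, the two terms cancel. If $k>\max R$, then $\max(R\cup\{k\})=k$ and the Case A inner function is $\partial^{|R|}f_k/\prod_{j\in R}\partial\bar{z}_j$; here I would invoke the symmetry of higher mixed antiholomorphic derivatives of a $\bar\partial$-closed $(0,1)$ form (the identity $\partial_{\bar{z}_a}f_b=\partial_{\bar{z}_b}f_a$ together with commutativity of partials implies inductively that $\partial^m f_{j_0}/\prod_i\partial\bar{z}_{j_i}$ depends only on the unordered multiset $\{j_0,j_1,\dots,j_m\}$) to identify this expression with $\partial_{\bar{z}_k}g_R$, again producing the cancellation. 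The unique unpaired contribution is Case A with $R=\emptyset$, that is $S=\{k\}$, which gives $g_{\{k\}}=f_k$. Hence $\partial_{\bar{z}_k}{\bf T}f=f_k$ for every $k$, so $\bar\partial{\bf T}f=f$.

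\textbf{Main obstacle.} The hard part is not analytic but combinatorial: verifying that the pairing above exhausts every term with the right signs, and correctly identifying the Case A inner function with $\partial_{\bar{z}_k}g_R$ in the sub-case $k>\max R$ by means of the iterated symmetry identity. Once that symmetry is in hand the cancellation is uniform across both sub-cases, and the remaining steps are a straightforward iteration of the arguments already performed in dimensions two and three.
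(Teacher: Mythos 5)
Your proof is correct and follows essentially the same route as the paper's: for the regularity you track first derivatives through Proposition~\ref{old} exactly as in Lemma~\ref{P}, and for $\bar\partial{\bf T}f=f$ you split the sum over whether $k$ lies in the index set, use the $\bar\partial$-closedness identity together with Fubini (commutativity of the slice operators) to match the inner functions, and cancel pairs of opposite sign, leaving $T^kf_k$. The paper organizes the same cancellation by re-indexing the $k\in I$ terms before applying $\partial_{\bar z_k}$, while you differentiate first and then pair $S=R\cup\{k\}$ with $S=R$; this is only a cosmetic reordering of the same argument.
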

\begin{proof}
The proof of the first statement follows the same steps as the two-dimensional case (Lemma \ref{P}). Now fix $k\in\{1,\dots,n\}$. We want to prove that $\partial_{\bar{z}_k}{\bf T}f=f_k$. First we rewrite the formula for ${\bf T}f$ separating the sets of indices $I=\{1\leq i_1<\dots<i_s\leq n\}$ for which $k\in I$. Hence
\begin{equation}\label{toberewritten}
\begin{split}
{\bf T}f&=\sum_{s=1}^{n} (-1)^{s-1}\sum_{\substack{I=\{1\leq i_{1}<\dots<i_s\leq n\}\\k\in I}}T^{i_1}\dots T^{i_s}\bigg{(}\frac{\partial^{s-1}f_{i_s}}{\partial \bar{z}_{i_1}\dots\partial\bar{z}_{i_{s-1}}}\bigg{)}\\&+\sum_{s=1}^{n-1} (-1)^{s-1}\sum_{\substack{I=\{1\leq i_{1}<\dots<i_s\leq n\}\\ k\not\in I}}T^{i_1}\dots T^{i_s}\bigg{(}\frac{\partial^{s-1}f_{i_s}}{\partial \bar{z}_{i_1}\dots\partial\bar{z}_{i_{s-1}}}\bigg{)}.
\end{split}
\end{equation}
Recall the following two facts:\begin{itemize}
\item Since $f$ is $\bar\partial$-closed, then for every set $I=\{1\leq i_1<\dots<i_s\leq n\}$ and every $k\in\{1,\dots,n\}$, we have
\begin{equation*}
\frac{\partial^s f_k}{\partial\bar{z}_{i_1}\dots\partial\bar{z}_{i_{s}}}=\frac{\partial^s f_{i_{s}}}{\partial\bar{z}_k\partial\bar{z}_{i_1}\dots\partial\bar{z}_{i_{s-1}}}.
\end{equation*}
\item By Fubini's theorem, the operators $T^i$ commute. That is, $T^iT^j=T^jT^i$ for all $i,j\in\{1,\dots,n\}$.
\end{itemize}
We can therefore rewrite \eqref{toberewritten} as
\begin{equation}\label{toapply}
\begin{split}
{\bf T}f=T^kf_k&+\sum_{s=1}^{n-1}(-1)^s \sum_{\substack{I=\{1\leq i_{1}<\dots<i_s\leq n\}\\k\not\in I}}T^kT^{i_1}\dots T^{i_s}\bigg{(}\frac{\partial^{s}f_{i_s}}{\partial\bar{z}_k\partial\bar{z}_{i_1}\dots\partial\bar{z}_{i_{s-1}}}\bigg{)}\\&+\sum_{s=1}^{n-1}(-1)^{s-1}\sum_{\substack{I=\{1\leq i_{1}<\dots<i_s\leq n\}\\k\not\in I}}T^{i_1}\dots T^{i_s}\bigg{(}\frac{\partial^{s-1}f_{i_s}}{\partial \bar{z}_{i_1}\dots\partial\bar{z}_{i_{s-1}}}\bigg{)}.
\end{split}
\end{equation}
Recall that if $g\in C^{1,\alpha}(\Omega)$, then $\partial_{\bar{z}_k}T^kg=g$ in $\Omega$. Hence, applying the operator $\partial_{\bar{z}_k}$ to \eqref{toapply}, we obtain 
\begin{equation}\label{weobtain}
\begin{split}
\partial_{\bar{z}_k}{\bf T}f=f_k&+\sum_{s=1}^{n-1}(-1)^s \sum_{\substack{I=\{1\leq i_{1}<\dots<i_s\leq n\}\\k\not\in I}}T^{i_1}\dots T^{i_s}\bigg{(}\frac{\partial^{s}f_{i_s}}{\partial\bar{z}_k\partial\bar{z}_{i_1}\dots\partial\bar{z}_{i_{s-1}}}\bigg{)}\\&+\sum_{s=1}^{n-1}(-1)^{s-1}\sum_{\substack{I=\{1\leq i_{1}<\dots<i_s\leq n\}\\k\not\in I}}T^{i_1}\dots T^{i_s}\bigg{(}\frac{\partial^{s}f_{i_s}}{\partial\bar{z}_k\partial \bar{z}_{i_1}\dots\partial\bar{z}_{i_{s-1}}}\bigg{)}.
\end{split}
\end{equation}
Since the two sums on the right side of \eqref{weobtain} cancel, then $\partial_{\bar{z}_k}{\bf T}f=f_k$, and the proof is complete.
\end{proof}
\begin{theorem}\label{main}
Let $D_1,\dots,D_n\subset\C$ be open bounded domains with $C^{1,\alpha}$ boundary, where $0<\alpha<1$. Consider the product domain $\Omega=D_1\times\dots\times D_n$. Let $f=f_1d\bar{z}_1+\dots+f_nd\bar{z}_n$ be a $\bar\partial$-closed $(0,1)$ form on $\Omega$ with components $f_j\in C^{n-1,\alpha}(\Omega).$ Then ${\bf T}f\in C^{1,\alpha}(\Omega)$ and $\bar\partial {\bf T}f=f$ in $\Omega$. Moreover, ${\bf T}f$ satisfies the supnorm estimate
\begin{equation*}
\norm{{\bf T}f}_{L^{\infty}(\Omega)}\leq C\norm{f}_{L^{\infty}(\Omega)}
\end{equation*}
for some constant $C$ independent of $f$.
\end{theorem}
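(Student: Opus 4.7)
By Lemma \ref{generalsolutionlemma}, the regularity ${\bf T}f\in C^{1,\alpha}(\Omega)$ and the equation $\bar\partial{\bf T}f=f$ are already established; the only point to prove is the supnorm estimate. Since \eqref{solutionformula} is a finite sum, it suffices to bound each summand $T^{i_1}\cdots T^{i_s}\big(\partial^{s-1}f_{i_s}/\partial\bar z_{i_1}\cdots\partial\bar z_{i_{s-1}}\big)$ by a constant multiple of $\norm{f}_{L^\infty(\Omega)}$. The case $s=1$ follows directly from Lemma \ref{lemma1}. For $s\geq 2$, after relabeling it suffices to treat the model term $T^1\cdots T^s(Df)$ with $Df:=\partial^{s-1}f_s/(\partial\bar z_1\cdots\partial\bar z_{s-1})$.

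The plan is to mimic the arguments of Theorem \ref{theoremC2} and Theorem \ref{theoremC3}. I would first verify the $s$-variable partial-fractions identity
\begin{equation*}
\frac{1}{\prod_{j=1}^s(\zeta_j-z_j)}=\frac{1}{G_s}\sum_{k=1}^s\frac{\prod_{j\neq k}\overline{(\zeta_j-z_j)}}{(\zeta_k-z_k)},\qquad G_s:=\sum_{k=1}^s\prod_{j\neq k}|\zeta_j-z_j|^2,
\end{equation*}
which extends \eqref{decompositionC2} and \eqref{decompositionC3} and is checked by clearing denominators. Substituting it into $T^1\cdots T^s(Df)$ produces a sum $\sum_{k=1}^s I_k$. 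In the $k$-th integrand I rewrite $Df$ using $\bar\partial$-closedness as $\partial^{s-1}f_k/(\prod_{j\neq k}\partial\bar\zeta_j)$, so that the $s-1$ derivatives of $f$ fall precisely on the variables $\zeta_j$ with $j\neq k$, i.e.\ on the variables where the kernel has no $(\zeta_j-z_j)^{-1}$ singularity. By symmetry among the $I_k$, it suffices to estimate $I_s$.

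Next, I would apply Stokes' theorem iteratively in the variables $\zeta_1,\ldots,\zeta_{s-1}$, excising a small disc $B_\epsilon(z_j)$ at each step as in Theorem \ref{theoremC3}. The dominated-convergence argument behind \eqref{citt1}--\eqref{citt2} extends cleanly, using the hypothesis $f_s\in C^{n-1,\alpha}(\Omega)\subset C^{s-1}(\Omega)$ together with a crude preliminary kernel bound to make the small-ball pieces vanish in the limit. The outcome is to express $I_s$ as a sum of at most $2^{s-1}$ integrals indexed by subsets $J\subset\{1,\ldots,s-1\}$ recording which variables contributed a boundary integral and which contributed a derivative to the kernel, each schematically of the form
\begin{equation*}
\int_{\prod_{j\in J}\partial D_j\,\times\,\prod_{j\notin J}D_j\,\times\,D_s} f_s(\zeta)\,K_J(\zeta,z)\,d\sigma(\zeta).
\end{equation*}

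The final step is to bound $|K_J|$ pointwise and apply Lemma \ref{lemma1} and Lemma \ref{lemma2}. An inductive computation based on the identity $\partial_{\bar\zeta_j}\big[\overline{(\zeta_j-z_j)}/G_s\big]=b_j/G_s^2$, where $b_j:=\prod_{i\neq j}|\zeta_i-z_i|^2$, shows that each $K_J$ is a rational expression whose denominator is a fixed power of $G_s$ and whose numerator is a product of conjugate factors, matching the pattern visible in \eqref{toestt} for $n=3$. The tool to convert this into a usable bound is the generalized AM-GM inequality extending \eqref{st} and \eqref{st3}: for any $\mu_1,\ldots,\mu_s\geq 0$ with $\sum\mu_j=m$,
\begin{equation*}
G_s^m\geq\prod_{j=1}^s|\zeta_j-z_j|^{2(m-\mu_j)}.
\end{equation*}
The freedom to choose the weights $\mu_j$ yields a family of bounds $|K_J(\zeta,z)|\leq C\prod_j|\zeta_j-z_j|^{-\beta_j}$, and I expect to arrange $\beta_j<1$ for $j\in J$ and $\beta_j<2$ for $j\notin J$, at which point Lemmas \ref{lemma1} and \ref{lemma2} close the argument. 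The hard part will be precisely this combinatorial bookkeeping: writing down an explicit formula for $K_J$ and verifying uniformly in $s$ and $J$ that a valid assignment of weights exists. The pattern appearing in the $n=3$ case, e.g.\ the choice $k_1=k_2=9,\,k_3=6$ used in \eqref{theG3}, strongly suggests that a balanced assignment proportional to the differentiation multiplicities always works, and an induction on $|J|$ should formalize this.
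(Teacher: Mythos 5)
Your plan follows the same route as the paper: the $s$-variable partial-fractions identity is the paper's Lemma~\ref{decompositioncomplexnumbers}, the rewrite of $Df$ in $s$ equivalent ways via $\bar\partial$-closedness is exactly what produces the paper's \eqref{in}, the iterated Stokes reduction is the paper's Lemma~\ref{lemmaStokesgeneral}, and the generalized power bound on $G_s$ is the paper's \eqref{easy}. The only organizational difference is that you estimate each summand $T^{i_1}\cdots T^{i_s}$ directly by relabeling, whereas the paper phrases the same reduction as an induction on $n$; the two are equivalent since a summand with $s$ factors is an $s$-dimensional integral with the remaining variables frozen.

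The step you defer as ``the hard part'' is precisely where the paper does its remaining work, and it is lighter than your sketch suggests. First, by the symmetry of $g_z$ in the first $s-1$ variables one does not need to track all subsets $J$: it suffices to handle, for each cardinality $m$, the single configuration $J=\{m+1,\dots,s-1\}$ (the paper's \eqref{gol}). Second, the kernel $\partial^m g_z/\partial\bar\zeta_1\cdots\partial\bar\zeta_m$ admits the closed form \eqref{derivativeofg}, obtained by a one-line induction on $m$ driven by exactly the identity $\partial_{\bar\zeta_j}\big[\overline{(\zeta_j-z_j)}/G_s\big]=b_j/G_s^2$ you quote; this is the explicit formula for your $K_J$. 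Plugging the power bound into that closed form turns the required integrability conditions into the linear system \eqref{sist2} in the weights, for which the paper exhibits a single explicit solution \eqref{sist3} valid for every $n$ and every $m$, with no induction on $|J|$. So your outline is correct and coincides with the paper's; what is missing is not a new idea but the explicit derivative formula, which collapses the weight-selection step from a potential combinatorial induction into a one-shot verification.
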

\begin{remark}
If $\Omega$ is a polydisc, then by rescaling the variables, considering for $\epsilon>0$ the form of components $f^{\epsilon}_j=f_j((1-\epsilon)z)$, we can drop the regularity assumption on the components $f_j$ from $C^{n-1,\alpha}(\Omega)$ to $C^{n-1}(\Omega)$.
\end{remark}

The proof of Theorem \ref{main} follows the same steps as in dimension two and three.
We start by proving a general version of the decomposition already exploited in \eqref{decompositionC2} and \eqref{decompositionC3}.
\begin{lemma}\label{decompositioncomplexnumbers}
Given non-zero complex numbers $a_1,\dots,a_m\in \C$, let $G:=\sum_{k=1}^m\prod_{l=1,l\neq k}^m |a_l|^2.$ Then
\begin{equation}\label{equationeasylemmadecomposition}
\frac{1}{a_1\cdots a_m}=\sum_{k=1}^m\frac{1}{a_kG}\prod _{\substack{l=1\\l\neq k}}^m\bar{a}_l.
\end{equation}
\end{lemma}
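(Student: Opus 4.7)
The identity is purely algebraic, so the plan is a direct computation starting from the right-hand side. First I would multiply the $k$th summand by $\bar{a}_k/\bar{a}_k$ (legitimate since the $a_k$ are non-zero) to turn each $a_k$ in the denominator into $|a_k|^2$ and to extend the product in the numerator so that it runs over all indices:
\begin{equation*}
\sum_{k=1}^m \frac{1}{a_k G}\prod_{\substack{l=1\\l\neq k}}^m \bar{a}_l = \frac{1}{G}\sum_{k=1}^m \frac{\prod_{l=1}^m \bar{a}_l}{|a_k|^2} = \frac{\prod_{l=1}^m \bar{a}_l}{G}\sum_{k=1}^m \frac{1}{|a_k|^2}.
\end{equation*}

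Next I would rewrite the remaining sum over $k$ by placing the terms over a common denominator $\prod_{l=1}^m |a_l|^2$; by the very definition of $G$ this is exactly
\begin{equation*}
\sum_{k=1}^m \frac{1}{|a_k|^2} = \frac{\sum_{k=1}^m \prod_{l\neq k} |a_l|^2}{\prod_{l=1}^m |a_l|^2} = \frac{G}{\prod_{l=1}^m |a_l|^2}.
\end{equation*}

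Substituting this back cancels the factor $G$, leaving
\begin{equation*}
\frac{\prod_{l=1}^m \bar{a}_l}{G}\cdot \frac{G}{\prod_{l=1}^m |a_l|^2} = \frac{\prod_{l=1}^m \bar{a}_l}{\prod_{l=1}^m a_l\bar{a}_l} = \frac{1}{a_1\cdots a_m},
\end{equation*}
which is \eqref{equationeasylemmadecomposition}.

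There is no real obstacle here; the only thing to keep straight is bookkeeping between the factor $G$ introduced in the denominators and the factor $G$ that materializes when clearing denominators in $\sum_k 1/|a_k|^2$. The cases $m=2$ and $m=3$ of the identity recover precisely \eqref{decompositionC2} and \eqref{decompositionC3}, which serves as a sanity check.
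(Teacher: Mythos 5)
Your computation is correct, and it is essentially the same direct algebraic verification the paper gives: the paper clears all $m$ summands to the common denominator $a_1\cdots a_m\cdot G$ and recognizes the resulting numerator as $G$, while you first pull out $\prod_l\bar a_l/G$ and then recognize $\sum_k |a_k|^{-2}=G/\prod_l|a_l|^2$. The bookkeeping is organized slightly differently but the idea and the cancellation are the same.
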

\begin{proof}
Taking common denominator on the right side of \eqref{equationeasylemmadecomposition}, we obtain
\begin{equation*}
\sum_{k=1}^m\frac{1}{a_kG}\prod _{\substack{l=1\\l\neq k}}^m\bar{a}_l=\sum_{k=1}^m\frac{\Big{(}\prod_{l=1,l\neq k} a_l\Big{)}\Big{(}\prod_{l=1,l\neq k}\bar{a}_l\Big{)}}{a_1\cdots a_m\cdot G}=\frac{1}{a_1\cdots a_m}.
\end{equation*}
The lemma is thus proved.
\end{proof}
The next lemma gives a formula for iterated applications of Stokes' theorem on a product domain in $\C^n$.
\begin{lemma}\label{lemmaStokesgeneral}
Let $D_1,\dots,D_n$ be open bounded domains in $\C$ with $C^1$ boundary. Let $\Omega=D_1\times\dots\times D_n$ and $f,g\in C^n(\overline{\Omega})$. Then 
\begin{equation*}\label{stokes}
\begin{split}
&\int_{D_1\times\dots\times D_n}\frac{\partial^{n}f}{\partial\bar{\zeta}_1\dots\partial\bar{\zeta}_{n}}\, g\,d\bar{\zeta}_1\wedge\dots\wedge d\zeta_{n}=\\&\sum_{m=0}^n (-1)^m\sum_{1\leq i_1<\dots<i_m\leq n}\int_{D_{i_1}\times\dots\times D_{i_m}\times\partial D_{j_1}\times\dots\times\partial D_{j_{n-m}}}f\,\frac{\partial^m g}{\partial \bar{\zeta}_{i_1}\dots\partial\bar{\zeta}_{i_m}}\,d\bar{\zeta}_{i_1}\wedge\dots\wedge d\zeta_{i_m}\wedge d\zeta_{j_1}\wedge\dots\wedge d\zeta_{j_{n-m}}.
\end{split}
\end{equation*}
Here $\{i_1,\dots,i_{m}\}\cup\{j_1,\dots,j_{n-m}\}=\{1,\dots,n\}$.
\end{lemma}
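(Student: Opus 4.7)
The plan is to prove Lemma \ref{lemmaStokesgeneral} by induction on $n$, using the one-variable complex Stokes formula at each step. The base case $n=1$ is the identity
\[
\int_{D_1} \frac{\partial f}{\partial\bar{\zeta}_1}\,g\,d\bar{\zeta}_1\wedge d\zeta_1 \;=\; \int_{\partial D_1} f g \, d\zeta_1 \;-\; \int_{D_1} f\,\frac{\partial g}{\partial\bar{\zeta}_1}\,d\bar{\zeta}_1\wedge d\zeta_1,
\]
which is immediate from $d(fg\,d\zeta_1) = \partial_{\bar{\zeta}_1}(fg)\,d\bar{\zeta}_1\wedge d\zeta_1$, the Leibniz rule, and Stokes' theorem. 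This corresponds exactly to the two terms $m=0$ (boundary, sign $+$) and $m=1$ (bulk with a derivative on $g$, sign $-$) on the right-hand side.

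For the inductive step, assume the formula holds for $n-1$ and consider the $n$-variable integral. By Fubini we write
\[
\int_{D_1\times\dots\times D_n}\frac{\partial^{n}f}{\partial\bar{\zeta}_1\dots\partial\bar{\zeta}_{n}}\,g\,d\bar{\zeta}_1\wedge\dots\wedge d\zeta_{n} \;=\; \int_{D_2\times\dots\times D_n}\!\left(\int_{D_1}\frac{\partial}{\partial\bar{\zeta}_1}\!\left[\frac{\partial^{n-1}f}{\partial\bar{\zeta}_2\dots\partial\bar{\zeta}_n}\right] g\,d\bar{\zeta}_1\wedge d\zeta_1\right)d\bar{\zeta}_2\wedge\dots\wedge d\zeta_n.
\]
I apply the base case in the $\bar{\zeta}_1$ variable to split this into two pieces: a boundary term over $\partial D_1\times D_2\times\dots\times D_n$ (the derivative stays on $f$), and a bulk term over $D_1\times\dots\times D_n$ with an extra $\partial_{\bar{\zeta}_1}$ falling on $g$, with a minus sign. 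To each piece I apply the inductive hypothesis in the remaining variables $\bar{\zeta}_2,\dots,\bar{\zeta}_n$, which expands each as a sum over subsets $I'\subset\{2,\dots,n\}$ with sign $(-1)^{|I'|}$.

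The two resulting double sums are then combined into a single sum over subsets of $\{1,\dots,n\}$: subsets $I$ with $1\notin I$ come from the boundary term (where $1$ is a boundary index $j$), while subsets with $1\in I$ come from the bulk term (where the factor $\partial_{\bar{\zeta}_1}$ landed on $g$, and the extra minus sign from the base case correctly boosts $(-1)^{|I'|}$ to $(-1)^{|I'|+1}=(-1)^{|I|}$). Matching the wedge-product differentials is automatic, since each $\bar{\zeta}_i$ either contributes $d\bar{\zeta}_i\wedge d\zeta_i$ (when $i\in I$) or $d\zeta_i$ (when $i\notin I$, i.e., it became a boundary variable).

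The main obstacle is purely the combinatorial bookkeeping: one must verify that the signs $(-1)^m$ propagate correctly through the recursion, and that the resulting parametrization by subsets $\{i_1<\dots<i_m\}\cup\{j_1<\dots<j_{n-m}\}=\{1,\dots,n\}$ matches exactly once each possibility is accounted for. Apart from this indexing, the argument is a routine iteration of the one-dimensional complex Stokes identity, and the hypothesis $f,g\in C^n(\overline{\Omega})$ together with the $C^1$ regularity of each $\partial D_i$ guarantees that Stokes' theorem applies at every stage.
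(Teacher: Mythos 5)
Your proposal is correct and follows essentially the same route as the paper: an induction on $n$ with the one-variable complex Stokes (integration-by-parts) identity as the base case. The only cosmetic difference is the order of operations — you peel off the variable $\zeta_1$ first and then invoke the inductive hypothesis on $\zeta_2,\dots,\zeta_n$, whereas the paper invokes the inductive hypothesis on $\zeta_1,\dots,\zeta_{n-1}$ first and then applies Stokes in $\zeta_n$ — but the combinatorial bookkeeping and the resulting re-indexing are the same.
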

\begin{proof}
We argue by induction on $n$. The case $n=1$ is the usual formula of integration by parts. Assume now that \eqref{stokes} holds for the product of $n-1$ domains. Then
\begin{equation*}
\begin{split}
&\int_{D_1\times\dots\times D_n}\frac{\partial^{n}f}{\partial\bar{\zeta}_1\dots\partial\bar{\zeta}_{n}}\, g\,d\bar{\zeta}_1\wedge\dots\wedge d\zeta_{n}=\\&\sum_{m=0}^{n-1}(-1)^m\sum_{1\leq i_1<\dots<i_m\leq n-1}\int_{D_n}\int_{D_{i_1}\times\dots\times D_{i_m}\times\partial D_{j_1}\times\dots\times\partial D_{j_{n-m-1}}}\frac{\partial f}{\partial\bar{\zeta}_n}\,\frac{\partial^m g}{\partial \bar{\zeta}_{i_1}\dots\partial\bar{\zeta}_{i_m}}\,d\bar{\zeta}_{i_1}\dots d\zeta_{j_{n-m-1}}\,d\bar{\zeta}_{n}\wedge d\zeta_n.
\end{split}
\end{equation*}
Applying Stokes' theorem in the variable $\zeta_n$ we get
\begin{equation*}
\begin{split}
&\int_{D_1\times\dots\times D_n}\frac{\partial^{n}f}{\partial\bar{\zeta}_1\dots\partial\bar{\zeta}_{n}}\, g\,d\bar{\zeta}_1\wedge\dots\wedge d\zeta_{n}=\\&\sum_{m=0}^{n-1}(-1)^m\sum_{1\leq i_1<\dots<i_m\leq n-1}\int_{D_{i_1}\times\dots\times D_{i_m}\times\partial D_{j_1}\times\dots\times\partial D_{j_{n-m-1}}\times\partial D_n}f\,\frac{\partial^m g}{\partial \bar{\zeta}_{i_1}\dots\partial\bar{\zeta}_{i_m}}\,d\bar{\zeta}_{i_1}\wedge\dots\wedge d\zeta_{j_{n-m-1}}\wedge d\zeta_n
\\&\sum_{m=0}^{n-1}(-1)^{m+1}\sum_{1\leq i_1<\dots<i_m\leq n-1}\int_{D_{i_1}\times\dots\times D_{i_m}\times D_n\times\partial D_{j_1}\times\dots\times\partial D_{j_{n-m-1}}}f\,\frac{\partial^{m+1} g}{\partial \bar{\zeta}_{i_1}\dots\partial\bar{\zeta}_{i_m}\partial\bar{\zeta}_n}\,d\bar{\zeta}_{i_1}\wedge\dots\wedge d\zeta_{j_{n-m-1}}.
\end{split}
\end{equation*}
Re-indexing the sums, we obtain
\begin{equation*}
\begin{split}
&\int_{D_1\times\dots\times D_n}\frac{\partial^{n}f}{\partial\bar{\zeta}_1\dots\partial\bar{\zeta}_{n}}\, g\,d\bar{\zeta}_1\wedge\dots\wedge d\zeta_{n}=\\& \sum_{m=0}^{n-1}(-1)^m\sum_{1\leq i_1<\dots<i_m\leq n-1}\int_{D_{i_1}\times\dots D_{i_m}\times\partial D_{j_1}\times\dots\times\partial D_{j_{n-m}}}f\,\frac{\partial^m g}{\partial \bar{\zeta}_{i_1}\dots\partial\bar{\zeta}_{i_m}}\,d\bar{\zeta}_{i_1}\wedge\dots\wedge d\zeta_{i_m}\wedge d\zeta_{j_1}\wedge\dots\wedge d\zeta_{j_{n-m}}\\&\sum_{m=1}^{n}(-1)^m\sum_{1\leq i_1<\dots<i_m=n}\int_{D_{i_1}\times\dots D_{i_m}\times\partial D_{j_1}\times\dots\times\partial D_{j_{n-m-1}}}f\,\frac{\partial^{m} g}{\partial \bar{\zeta}_{i_1}\dots\partial\bar{\zeta}_{i_m}}\,d\bar{\zeta}_{i_1}\wedge\dots\wedge d\zeta_{i_m}\wedge d\zeta_{j_1}\wedge\dots\wedge d\zeta_{j_{n-m}},\\
\end{split}
\end{equation*}
which completes the proof of the lemma.
\end{proof} 

\begin{proof}[Proof of Theorem \ref{main}]
We only need to prove that the estimate $\norm{Tf}_{L^{\infty}(\Omega)}\leq C\norm{f}_{L^{\infty}(\Omega)}$ holds for some constant $C$ independent of $f$. We argue by induction on the dimension $n$. Note that the cases $n=2$ and $n=3$ have already been proved in Theorem \ref{theoremC2} and Theorem \ref{theoremC3} respectively. Now assume that the theorem holds for $n-1$. Then in \eqref{solutionformula} all the terms where $s\leq n-1$ can be estimated, and we just have to argue for the expression
\begin{equation*}
T^1\dots T^n\bigg{(}\frac{\partial^{n-1}f_n}{\partial\bar{z}_1\dots\partial\bar{z}_{n-1}}\bigg{)}=\frac{1}{(-2\pi i)^n}\int_{D_1\times\dots\times D_n}\frac{\partial^{n-1}f_n}{\partial\bar{\zeta}_1\dots\partial\bar{\zeta}_{n-1}}\frac{1}{(\overline{\zeta_1-z_1})\dots(\overline{\zeta_n-z_n}))}\,d\bar{\zeta}_1\wedge\dots\wedge d\zeta_n.
\end{equation*}
For convenience, define
\begin{equation*}
Df:=\frac{\partial^{n-1}f_n}{\partial\bar{z}_1\dots\partial\bar{z}_{n-1}}.
\end{equation*}
 By Lemma \ref{decompositioncomplexnumbers} applied with $a_j=(\zeta_j-z_j)$ and $G=\sum_{k=1}^n\prod_{l=1,l\neq k}^n |a_l|^2$, we can write 
\begin{equation}\label{thesum}
\begin{split}
(-2\pi i)^nT^1T^2\dots T^n (Df)&=\sum_{k=1}^n\int_{D_1\times\dots\times D_n} Df\,\frac{\prod_{l=1,l\neq k}^n(\overline{\zeta_l-z_l})}{(\zeta_k-z_k)\,G}\,d\bar{\zeta}_1\wedge\dots\wedge d\zeta_n.\\
\end{split}
\end{equation}
Rewriting $Df$ in $n$ equivalent ways (exploiting that $f$ is $\bar\partial$-closed), we can expand the sum in \eqref{thesum} as
\begin{equation}\label{in}
(-2\pi i)^nT^1T^2\dots T^n (Df)=\sum_{k=1}^n\int_{D_1\times\dots\times D_n}\frac{\partial^{n-1}f_k}{\partial\bar{\zeta}_1\dots\widehat{\partial\bar{\zeta}_k}\dots\partial\bar{\zeta}_n}\frac{\prod_{l=1,l\neq k}^n(\overline{\zeta_l-z_l})}{(\zeta_k-z_k)\,G}\,d\bar{\zeta}_1\wedge\dots\wedge d\zeta_n.
\end{equation}
Here the notation $\widehat{\partial\bar{\zeta}_k}$ indicates that the corresponding term has been removed. Note that the integrals appearing in \eqref{in} can all be estimated in the same way, by renaming the variables. We can therefore restrict our attention to one of them, say the one where $k=n$. The theorem is thus proved if we can estimate the integral
\begin{equation}\label{te}
\int_{D_1\times\dots\times D_n}\frac{\partial^{n-1}f_n}{\partial\bar{\zeta}_1\dots\partial\bar{\zeta}_{n-1}}\frac{(\overline{\zeta_1-z_1})\dots(\overline{\zeta_{n-1}-z_{n-1}})}{(\zeta_n-z_n)\,G}\,d\bar{\zeta}_1\wedge\dots\wedge d\zeta_n.
\end{equation}
For fixed $z=(z_1,\dots,z_n)\in D_1\times\dots\times D_n$, we define \[g_z:=\frac{(\overline{\zeta_1-z_1})\dots(\overline{\zeta_{n-1}-z_{n-1}})}{(\zeta_n-z_n)\,G}.\]
Although $g_z\not\in C^n(\overline{\Omega})$, Lemma \ref{lemmaStokesgeneral} still holds. In fact, we can deal with the singularities of $g_z$ in the same way as for the cases of dimension two and three: at each application of Stokes' theorem, we remove a small ball of radius $\epsilon$ from the point of singularity and then apply the dominated convergence theorem, letting $\epsilon\to 0$. For the clarity of the exposition, we do not repeat such details here.

By Lemma \ref{lemmaStokesgeneral}, in order to estimate \eqref{te}, it is enough to prove, for every $m\in\{0,\dots,n-1\}$ and every choice of $m$ indices $1\leq i_1<\dots<i_m\leq n-1$, that there exists a constant $C$ such that
\begin{equation*}
\norm{\int_{D_{i_1}\times\dots\times D_{i_m}\times\partial D_{j_1}\times\dots\times\partial D_{j_{n-1-m}}\times D_n}\frac{\partial^m g_z}{\partial\bar{\zeta}_{i_1}\dots\partial\bar{\zeta}_{i_m}}\,d\bar{\zeta}_{i_1}\wedge\dots\wedge d\zeta_n}_{L^{\infty}(\Omega)}\leq C.
\end{equation*}
Here \[\{j_1,\dots,j_{n-1-m}\}\cup\{i_1,\dots,i_m\}=\{1,\dots,n-1\}.\] Note that $m=0$ corresponds to taking no derivatives, that is, showing that 
\begin{equation*}
\norm{\int_{\partial D_1\times\dots\times\partial D_{n-1}\times D_n} g_z\,d\zeta_{i_1}\wedge\dots\wedge d\zeta_n}_{L^{\infty}(\Omega)}\leq C.
\end{equation*}
Up to renaming the variables, because of the symmetries of the function $g_z$, it is enough to prove, for every $m\in\{0,\dots,n-1\}$, that there exists a constant $C$ such that
\begin{equation}\label{gol}
\norm{\int_{D_{1}\times\dots\times D_{m}\times\partial D_{m+1}\times\dots\times\partial D_{n-1}\times D_n}\frac{\partial^m g_z}{\partial\bar{\zeta}_{1}\dots\partial\bar{\zeta}_{m}}\,d\bar{\zeta}_{1}\wedge\dots\wedge d\zeta_n}_{L^{\infty}(\Omega)}\leq C.
\end{equation}
 Once again, we allow $m$ to be equal to $0$.
 
We first prove a formula for the derivatives of $g_z$. For every $m=1,\dots,n-1$ we have
\begin{equation}\label{derivativeofg}
\frac{\partial g_z}{\partial\bar{\zeta}_1\dots\partial\bar{\zeta}_m}=\frac{m!\big{(}\prod_{j=1}^m|\zeta_j-z_j|^{2(m-1)}\Big{)}\Big{(}\prod_{j=m+1}^{n-1}(\overline{\zeta_{j}-z_{j}})|\zeta_{j}-z_{j}|^{2m}\Big{)}(\overline{\zeta_n-z_n})|\zeta_n-z_n|^{2m-2}}{G^{m+1}}.
\end{equation}
The proof of \eqref{derivativeofg} is a simple induction on $m$. The case $m=1$ is clear. Assume that \eqref{derivativeofg} holds for some $m\leq n-2$. Then 
\begin{equation}\label{2}
\begin{split}
\frac{\partial g_z}{\partial\bar{\zeta}_1\dots\partial\bar{\zeta}_{m+1}}
&=m!\bigg{(}\prod_{j=1}^m|\zeta_j-z_j|^{2(m-1)}\bigg{)}(\zeta_{m+1}-z_{m+1})^m\bigg{(}\prod_{j=m+2}^{n-1}(\overline{\zeta_{j}-z_{j}})|\zeta_j-z_j|^{2m}\bigg{)}\\&(\overline{\zeta_n-z_n})|\zeta_n-z_n|^{2m-2}\frac{\partial}{\partial\bar{\zeta}_{m+1}}\Bigg{\{}\bigg{(}\frac{(\overline{\zeta_{m+1}-z_{m+1}})}{G}\bigg{)}^{m+1}\Bigg{\}}.
\end{split}
\end{equation}
It is easy to see that the right side of \eqref{2} is equal to the right side of \eqref{derivativeofg} with $m$ replaced by $m+1$. Now that we have proved \eqref{derivativeofg}, we use it to show \eqref{gol}. We follow the strategy already employed in the cases $n=2$ and $n=3$. Note that if $b_1,\dots, b_n$ are non-negative real numbers, $k_1,\dots,k_n$ are non-negative integers, and $k=k_1+\dots+k_n$, then
\begin{equation}\label{easy}
(b_1+ \dots +b_n)^k \,\geq \, b_1^{k_1} \cdots b_n^{k_n}.
\end{equation}
We write $G=\sum_{j=1}^nb_j$, with $b_j=\prod_{l=1,l\neq j}^n|\zeta_l-z_l|^2$. By \eqref{easy}, we have \begin{equation*}
G^{m+1}\geq b_1^{\frac{k_1(m+1)}{k}}\cdots b_n^{\frac{k_n(m+1)}{k}}.
\end{equation*} 
Hence \eqref{gol} is proved if for every $m\in\{0,\dots,n-1\}$ we can find positive integers $k=k_1+\dots+k_n$ and a constant $C$ such that
\begin{equation}\label{golla}
\norm{\int_{D_{1}\times\dots\times D_{m}\times\partial D_{m+1}\times\dots\times\partial D_{n-1}\times D_n}H_m\,d\bar{\zeta}_{1}\wedge\dots\wedge d\zeta_n}_{L^{\infty}(\Omega)}\leq C,
\end{equation}
where
\begin{equation}\label{derivativeofg2}
H_m=\frac{\big{(}\prod_{j=1}^m|\zeta_j-z_j|^{2(m-1)}\Big{)}\Big{(}\prod_{j=m+1}^{n-1}(\overline{\zeta_{j}-z_{j}})|\zeta_{j}-z_{j}|^{2m}\Big{)}(\overline{\zeta_n-z_n})|\zeta_n-z_n|^{2m-2}}{b_1^{\frac{k_1(m+1)}{k}}\cdots b_n^{\frac{k_n(m+1)}{k}}}.
\end{equation}
By the definition of the $b_j$, we can rewrite \eqref{derivativeofg2} as
\begin{equation}\label{last}
H_m=\frac{\big{(}\prod_{j=1}^m|\zeta_j-z_j|^{2(m-1)}\Big{)}\Big{(}\prod_{j=m+1}^{n-1}(\overline{\zeta_{j}-z_{j}})|\zeta_{j}-z_{j}|^{2m}\Big{)}(\overline{\zeta_n-z_n})|\zeta_n-z_n|^{2m-2}}{|\zeta_1-z_1|^{\frac{2(m+1)}{k}(k_2+\dots+k_n)}\cdots |\zeta_n-z_n|^{\frac{2(m+1)}{k}(k_1+\dots+k_{n-1})}}.
\end{equation}
By Lemma \ref{lemma1} and Lemma \ref{lemma2}, inequality \eqref{golla} holds for some constant $C$ if the numbers $k,k_1,\dots,k_n$ satisfy the system 
\begin{equation}\label{sist}
\begin{cases}
 \frac{2(m+1)}{k}(k-k_j)-2(m-1)<2\quad j=1,\dots,m\\
\frac{2(m+1)}{k}(k-k_j)-(2m+1)<1\quad j=m+1,\dots,n-1\\
\frac{2(m+1)}{k}(k-k_n)-(2m-1)<2\\
k_1+\dots+k_n=k.
\end{cases}
\end{equation} 
Expanding, we see that \eqref{sist} is equivalent to
\begin{equation}\label{sist2}
\begin{cases}
k_j>\frac{k}{m+1}\quad j=1,\dots,m\\
k_j>0\quad j=m+1,\dots,n-1\\
k_n>\frac{k}{2(m+1)}\\
k_1+\dots+k_n=k.
\end{cases}
\end{equation}
One can easily verify that letting
\begin{equation}\label{sist3}
\begin{cases}
k=4(n-1)(m+1)\\
k_j=4(n-1)+1\quad j=1,\dots,m\\
k_j=1 \quad j=m+1,\dots,n-1\\
k_n=k-(k_1+\dots+k_{n-1}),
\end{cases}
\end{equation}
then the system \eqref{sist2} is satisfied for every choice of $n\geq 2$ and $m=0,\dots,n-1$. The proof of the theorem is therefore complete.
\end{proof}

\section{The integral operator ${\bf \widetilde{T}}$}\label{S6}
Let $\Omega=D_1\times\dots\times D_n$ be the product of one-dimensional open bounded domains $D_j$ with $C^{1,\alpha}$ boundary, where $0<\alpha<1$. Let $f=f_1d\bar{z}_1+\dots +f_nd\bar{z}_n$ be a $\bar\partial$-closed $(0,1)$ form with components $f_j\in C^{n-1,\alpha}(\Omega)$. In Section \ref{sectionCn} we showed that a solution to $\bar\partial u=f$ in $\Omega$ that satisfies supnorm estimates in $\Omega$ is given by $u={\bf T}f$, where
\begin{equation}\label{mit}
{\bf T}f:=\sum_{s=1}^{n} (-1)^{s-1}\sum_{1\leq i_{1}<\dots<i_s\leq n}T^{i_1}\dots T^{i_s}\bigg{(}\frac{\partial^{s-1}f_{i_s}}{\partial \bar{z}_{i_1}\dots\partial\bar{z}_{i_{s-1}}}\bigg{)}.
\end{equation}
Recall that for every set of indices $\{1\leq i_1<\dots <i_s\leq n\}$ we have
\begin{equation}\label{lan}
T^{i_1}\dots T^{i_s}\bigg{(}\frac{\partial^{s-1}f_{i_s}}{\partial \bar{z}_{i_1}\dots\partial\bar{z}_{i_{s-1}}}\bigg{)}(z_1,\dots,z_n)=\frac{1}{(-2\pi i)^s}\int_{D_{i_1}\times\dots\times D_{i_s}}\frac{\partial^{s-1}f_{i_s}}{\partial\bar{\zeta}_{i_1}\dots\partial\bar{\zeta}_{i_{s-1}}}\frac{1}{(\zeta_{i_1}-z_{i_1})\dots(\zeta_{i_s}-z_{i_s})}.
\end{equation}
\begin{notation}
To keep our formulas shorter, throughout this section we often omit the volume form when displaying an integral. 
\end{notation}

The formula for ${\bf T}f$ involves ``solid integrals" over the domains $D_j$ and derivatives up to order $n-1$ of the components $f_j$. The goal of this section is to make explicit something that has already been achieved in the proof of Theorem \ref{main}. Namely, we show how one can rewrite ${\bf T}f$ in such a way that no derivatives of the components $f_j$ appear in the formula, but paying the price of introducing integrals over the boundaries $\partial D_j$. 

We start by defining
\begin{equation*}
G_z^{i_1\dots i_s}:=\sum_{k=1}^s\prod_{\substack{l=1\\ l\neq k}}^s|\zeta_{i_l}-z_{i_l}|^2.
\end{equation*}
Exploiting that $f$ is $\bar\partial$-closed, we can rewrite \eqref{lan} as
\begin{equation*}
T^{i_1}\dots T^{i_s}\bigg{(}\frac{\partial^{s-1}f_{i_s}}{\partial \bar{z}_{i_1}\dots\partial\bar{z}_{i_{s-1}}}\bigg{)}(z_1,\dots,z_n)=\frac{1}{(-2\pi i)^s}\sum_{k=1}^s \int_{D_{i_1}\times\dots\times D_{i_s}}\frac{\partial^{s-1}f_{i_k}}{\partial\bar{\zeta}_{i_1}\dots\widehat{\partial\bar{\zeta}_k}\dots\partial\bar{\zeta}_{i_{s}}}\frac{\prod_{l=1, l\neq k}^s(\overline{\zeta_{i_l}-z_{i_l}})}{(\zeta_{i_k}-z_{i_k})\,G_z^{i_1\dots i_s}}.
\end{equation*}
For convenience, we define
\begin{equation*}
 g_{z}^{k,i_1\dots i_s}:=\frac{\prod_{l=1, l\neq k}^s(\overline{\zeta_{i_l}-z_{i_l}})}{(\zeta_{i_k}-z_{i_k})\,G_z^{i_1\dots i_s}}.
\end{equation*}
The proof of Theorem \ref{main} shows that by repeated applications of Stokes' theorem it is possible to move all the derivatives from the components $f_{j}$ to the kernel functions $g_z^{k,i_1\dots i_s}$. One thus obtains the following formula:
\begin{equation}\label{usoasdef}
\begin{split}
&T^{i_1}\dots T^{i_s}\bigg{(}\frac{\partial^{s-1}f_{i_s}}{\partial \bar{z}_{i_1}\dots\partial\bar{z}_{i_{s-1}}}\bigg{)}(z_1,\dots,z_n)=\\&\frac{1}{(-2\pi i)^s}\sum_{k=1}^s \sum_{m=0}^{s-1} (-1)^m\sum_{1\leq j_1<\dots <j_m\leq i_{s}}\int_{D_{j_1}\times\dots\times D_{j_m}\times\partial D_{t_1}\times\dots\times\partial D_{t_{s-m-1}}\times D_{i_k}}f_{i_k}(z',\zeta)\,\frac{\partial^m g_z^{k,i_1\dots i_s}}{\partial\bar{\zeta}_{j_1}\dots\partial\bar{\zeta}_{j_m}}.
\end{split}
\end{equation}
Here, for every $k$, the $j_l$ and the $t_l$ are two complementary sets of indices in $\{i_1,\dots,\widehat{i_k},\dots,i_s\}$. That is, \begin{equation}\label{ind}
\{j_1,\dots,j_m\}\cup\{t_1,\dots,t_{s-m-1}\}=\{i_1,\dots,\widehat{i_k},\dots,i_s\}.
\end{equation}
For each integral in \eqref{usoasdef}, we have denoted by $\zeta$ the variables upon which the integration occurs, and by $z'$ the components of the point $z=(z_1,\dots,z_n)$ in the variables that are not integrated. From now on we will omit this bit of notation whenever there is no ambiguity.

Equation \eqref{usoasdef} combined with \eqref{mit} achieves the goal described above: we now have a formula for ${\bf T}f$ which does not involve taking derivatives of the components $f_j$. We have introduced, however, integrals over the boundaries $\partial D_j$. 
\begin{example}
We show the explicit formula for ${\bf T}f$ that one obtains in the two-dimensional case following the steps described above.
\begin{equation}\label{dim2tilde}
\begin{split}
{\bf T}f&=T^1f_1+T^2f_2-\frac{1}{(2\pi i)^2}\,\Bigg{(}\int_{\partial D_1\times D_2}\frac{(\overline{\zeta_1-z_1}) f_2(\zeta_1,\zeta_2)}{(\zeta_2-z_2)|\zeta-z|^2}\,d\zeta_1\wedge d\bar{\zeta}_2\wedge d\zeta_2\\&+\int_{ D_1\times \partial D_2} \frac{(\overline{\zeta_2-z_2})f_1(\zeta_1,\zeta_2)}{(\zeta_1-z_1)|\zeta-z|^2}\,d\bar{\zeta}_1\wedge d\zeta_1\wedge d\zeta_2-\int_{ D_1\times D_2} \frac{(\overline{\zeta_1-z_1})f_1(\zeta_1,\zeta_2)}{|\zeta-z|^4}\,d\bar{\zeta}_1\wedge d\zeta_1 \wedge d\bar{\zeta}_2 \wedge d\zeta_2\\&-\int_{ D_1\times D_2} \frac{(\overline{\zeta_2-z_2})f_2(\zeta_1,\zeta_2)}{|\zeta-z|^4}\,d\bar{\zeta}_1\wedge d\zeta_1\wedge d\bar{\zeta}_2\wedge d\zeta_2\Bigg{)}.
\end{split}
\end{equation}
\end{example}
In the proof of Theorem \ref{main} we showed, for each choice of indices as in \eqref{ind}, that there exists a constant $C$ such that
\begin{equation}\label{kh}
\norm{\int_{D_{j_1}\times\dots\times D_{j_m}\times\partial D_{t_1}\times\dots\times\partial D_{t_{s-m-1}}\times D_{i_k}}\bigg{|}\frac{\partial^m g_z^{k,i_1\dots i_s}}{\partial\bar{\zeta}_{j_1}\dots\partial\bar{\zeta}_{j_m}}\bigg{|}}_{L^{\infty}(\Omega)}\leq C.
\end{equation}
Hence the right side of \eqref{usoasdef} makes sense even when the components $f_j$ are just assumed continuous in $\overline{\Omega}$. We can thus use \eqref{usoasdef} to define a new operator $T^{[i_1\dots i_s]}$ on $(0,1)$ forms $f$ with components $f_j\in C^{0}(\overline{\Omega})$.
\begin{equation}\label{newdef}
T^{[i_1\dots i_s]}f(z):=\frac{1}{(-2\pi i)^s}\sum_{k=1}^s \sum_{m=0}^{s-1} (-1)^m\sum_{1\leq j_1<\dots <j_m\leq i_{s}}\int_{D_{j_1}\times\dots\times D_{j_m}\times\partial D_{t_1}\times\dots\times\partial D_{t_{s-m-1}}\times D_{i_k}}f_{i_k}\,\frac{\partial^m g_z^{k,i_1\dots i_s}}{\partial\bar{\zeta}_{j_1}\dots\partial\bar{\zeta}_{j_m}}.
\end{equation}
\begin{remark}
When $s=1$, formula \eqref{newdef} recovers the ``slice" operators $T^i$ defined in \eqref{sliceop}. Note that, by construction, if $f$ is $\bar\partial$-closed and with components $f_j\in C^{n-1,\alpha}(\Omega)$, for some $0<\alpha<1$, then \begin{equation}\label{equali}
T^{[i_1\dots i_s]}f=T^{i_1}\dots T^{i_s}\bigg{(}\frac{\partial^{s-1}f_{i_s}}{\partial \bar{z}_{i_1}\dots\partial\bar{z}_{i_{s-1}}}\bigg{)}.
\end{equation}
\end{remark} We are now in a position to extend the operator ${\bf T}$ to all forms $f$ with components $f_j\in C^{0}(\overline{\Omega})$. For such forms, we define 
\begin{equation}\label{newsolutionformula}
{\bf \widetilde{T}}f:=\sum_{s=1}^{n} (-1)^{s-1}\sum_{1\leq i_{1}<\dots<i_s\leq n}T^{[i_1\dots i_s]}f.
\end{equation}
\begin{remark}
In dimension 2, a formula for ${\bf \widetilde{T}}f$ is given by the right side of \eqref{dim2tilde}.
\end{remark}
\begin{remark}\label{ida} By \eqref{equali}, if the form $f$ is $\bar\partial$-closed and has components $f_j\in C^{n-1,\alpha}(\Omega)$ for some $0<\alpha<1$, then 
\begin{equation*}
{\bf \widetilde{T}}f={\bf T}f \quad \text{in }\Omega.
\end{equation*}
\end{remark}

We expect that the operator ${\bf \widetilde{T}}$ will play a role in finding weak solutions to $\bar\partial u=f$ satisfying supnorm estimates on a product domain $\Omega$. Here $f$ is a $(0,1)$ form that is $\bar\partial$-closed in weak sense in $\Omega$ and has components $f_j\in C^0(\overline{\Omega})$. 

\section{Acknowledgements}
The authors acknowledge helpful discussions with Liwei Chen and Jeffery McNeal, and are grateful for their comments on the first draft of this paper. The first author would like to thank the Mathematics Department at Purdue University Fort Wayne for the warm hospitality and financial support.


\begin{thebibliography}{WWWW}

\bibitem[CM18]{CM} Liwei Chen and Jeffery D. McNeal, A solution operator for $\bar\partial$ on the Hartogs triangle and   $L^p$ estimates. {\em Math. Ann.} (2018) \href{https://doi.org/10.1007/s00208-018-1778-5}{doi.org/10.1007/s00208-018-1778-5}.
\bibitem[CM19]{CM19} Liwei Chen and Jeffery D. McNeal, Product domains, Multi-Cauchy transforms, and the $\bar\partial$ equation. Preprint. \href{https://arxiv.org/abs/1904.09401}{arxiv.org/abs/1904.09401}.
\bibitem[FLZ11]{FLZ11} John Erik Forn{\ae}ss, Lina Lee and Yuan Zhang, On supnorm estimates for $\bar\partial$ on infinite type convex domains in $\C^2$. {\em J. Geom. Anal.} {\bf 21} (2011), no. 3, 495--512.
\bibitem[GL71]{GL71} Hans Grauert and Ingo Lieb,
Das Ramirezsche Integral und die L\"osung der Gleichung $\bar\partial f=\alpha$ im Bereich der beschr\"ankten Formen. (German) 
{\em Rice Univ. Studies} {\bf 56} (1971), no. 2, 29--50. 
\bibitem[H71]{H71} Gennadi\u{\i} M. Henkin, A uniform estimate for the solution of the $\bar\partial$-problem in a Weil region. (Russian) {\em Uspehi Mat. Nauk} {\bf 26} (1971), no. 3 (159), 211--212.
\bibitem[H70]{H70} Gennadi\u{\i} M. Henkin, Integral representation of functions in strongly pseudoconvex regions, and applications to the $\bar\partial$-problem. {\em Mat.Sb.} {\bf 124} (1970), no. 2, 300--308.
\bibitem[NW63]{NW} Albert Nijenhuis and William B. Woolf, Some integration problems in almost-complex and complex manifolds. {\em Ann. of Math.} (3) {\bf 77} (1963), 426--489.
\bibitem[PH78]{PH78} Albert I. Petrosjan and Gennadi\u{\i} M. Henkin, Solution with a uniform estimate of the $\bar\partial$-equation in a real nondegenerate Weil polyhedron. (Russian. English, Armenian summary) 
{\em Izv. Akad. Nauk Armyan. SSR Ser. Mat.} {\bf 13} (1978), no. 5-6, 428--441, 542. 
\bibitem[RS73]{RS73} Michael R. Range and Yum-Tong Siu,
Uniform estimates for the $\bar\partial$-equation on domains with piecewise smooth strictly pseudoconvex boundaries. {\em Math. Ann.} {\bf 206} (1973), 325--354.
\bibitem[SH80]{SH80} Armen G. Sergeev and Gennadi\u{\i} M. Henkin, Uniform estimates of the solutions of the $\bar\partial$-equation in pseudoconvex polyhedra. (Russian) {\em Mat. Sb. (N.S.)} {\bf 112} (154) (1980), no. 4(8), 522--567.  
\bibitem[V62]{V} Il'ya Nestorovich Vekua, Generalized analytic functions. {\em Pergamon Press, London-Paris-Frankfurt; Addison-Wesley Publishing Co., Inc., Reading, Mass.} (1962).
\end{thebibliography}
\end{document}